  \def\sw#1{{\sb{(#1)}}} 
  \def\su#1{{\sp{[#1]}}}
  \def\<{{\langle}} 
  \def\>{{\rangle}}
  \def\eps{\varepsilon}
  \def\note#1{{}} 
 \def\can{{\rm \textsf{can}}}
  \def\note#1{} 
  \def\cO{{\mathcal O}}
  \def\rhom#1#2#3{{{\rm Hom}\sb{#1}(#2,#3)}}
  \def\beq{\begin{equation}} 
  \def\eeq{\end{equation}}
  \def\id{\mathrm{id}} 
  \def\im{{\rm Im}}
  \def\ut{{\otimes}} 
  \def\ot{{\otimes}}
  \def\roA{{\varrho^A}}
 \def\coker{\mathrm{coker}}
\def\stac#1{\raise-.2cm\hbox{$\stackrel{\displaystyle\otimes}{\scriptscriptstyle{#1}}$}}
\def\cten#1{\raise-.2cm\hbox{$\stackrel{\displaystyle\widehat{\otimes}}
{\scriptscriptstyle{#1}}$}}
  \def\Label#1{\label{#1}\ifmmode\llap{[#1] }\else 
  \marginpar{\smash{\hbox{\tiny [#1]}}}\fi} 
  \def\Label{\label}
\def\ot{\otimes}
\def\CC{{\mathbb C}}
\def\NN{{\mathbb N}}
\def\PP{{\mathbb P}}
\def\RR{{\mathbb R}}
\def\WW{{\mathbb W}}
\def\ZZ{{\mathbb Z}}
\newcommand{\Cc}{\mathcal{C}}
\newcommand{\Hh}{\mathcal{H}}
\def\*C{{}^*\hspace*{-1pt}{\Cc}}
\def\text#1{{\rm {\rm #1}}}
 \def\1{\mathbf{1}}
      \def\tr{\mathrm{Tr}\ }
\newtheorem{proposition}{Proposition}[section]
  \newtheorem{lemma}[proposition]{Lemma} 
  \newtheorem{theorem}[proposition]{Theorem} 
  \newtheorem{definition}[proposition]{Definition}
  \newtheorem{example}[proposition]{Example}
 \address{ Department of Mathematics, Swansea University, 
  Singleton Park,  Swansea SA2 8PP, U.K.} 
  \abstract{ The algebraic approach to bundles in non-commutative geometry and the definition of quantum real weighted projective spaces are reviewed.  Principal $U(1)$-bundles over quantum real weighted projective spaces are constructed. As the spaces in question fall into two separate classes, the {\em negative} or {\em odd} class that generalises quantum real projective planes and the {\em positive} or {\em even} class that generalises the quantum disc, so do the constructed principal bundles. In the negative case the principal bundle is proven to be non-trivial and associated projective modules are described. In the positive case the principal bundles turn out to be trivial, and so all the associated modules are free. It is also shown that the circle (co)actions on the quantum Seifert manifold that define quantum real weighted projective spaces are almost free.} 
\begin{document}

\section{Introduction.}

In an algebraic setup an action of a circle on a quantum space corresponds to a coaction of a Hopf algebra of Laurent polynomials in one variable on the noncommutative coordinate algebra of  the quantum space. Such a coaction can equivalently be understood as a $\ZZ$-grading of this coordinate algebra. A typical $\ZZ$-grading assigns degree $\pm 1$ to every generator of this algebra (different from the identity). The degree zero part forms a subalgebra which in particular cases corresponds to quantum complex or real projective spaces (grading of coordinate algebras of quantum spheres \cite{VakSob:alg} or prolonged quantum spheres \cite{BrzZie:pri}). Often this grading is strong, meaning that the product of $i,j$-graded parts is equal to the $i+j$-part of the total algebra. In geometric terms this reflects the freeness of the circle action. 
 
In two recent papers \cite{BrzFai:tea} and \cite{Brz:sei} circle actions on three-dimensional (and, briefly, higher dimensional) quantum spaces were revisited. Rather than assigning a uniform grade to each generator, separate generators were given degree by pairwise coprime integers.  The zero part of  such a grading  of the coordinate algebra of the quantum odd-dimensional sphere corresponds to the quantum weighted projective space, while the zero part of such a grading of the algebra of the prolonged even dimensional quantum sphere leads to quantum real weighted projective spaces. 

In this paper we focus on two classes of algebras $\cO(\RR\PP_q^2(l;-))$ ($l$ a positive integer) and $\cO(\RR\PP_q^2(l;+))$ ($l$ an odd positive integer) identified in \cite{Brz:sei} as fixed points of weighted circle actions on the coordinate algebra $\cO(\Sigma_q^3)$ of a non-orientable quantum Seifert manifold described in \cite{BrzZie:pri}. Our aim is to construct quantum $U(1)$-principal bundles over the corresponding quantum spaces $\RR\PP_q^2(l;\pm)$ and describe associated line bundles. Recently, the importance of such bundles in non-commutative geometry  was once again brought to the fore in \cite{BegBrz:lin}, where the non-commutative Thom construction was outlined.  As a further consequence of the principality of  $U(1)$-coactions we also deduce that $\RR\PP_q^2(l;\pm)$ can be   understood as quotients of $\Sigma_q^3$ by almost free $S^1$-actions. 

We begin in Section~2 by reviewing elements of algebraic approach to classical and quantum bundles. We then proceed to describe algebras $\cO(\RR\PP_q^2(l;\pm))$ in Section~3. Section~4 contains main results including construction of principal comodule algebras over $\cO(\RR\PP_q^2(l;\pm))$. We observe that constructions albeit very similar in each case yield significantly different results. The principal comodule algebra over $\cO(\RR\PP_q^2(l;-))$ is non-tirivial while that over $\cO(\RR\PP_q^2(l;+))$ turns out to be trivial (this means that all associated bundles are trivial, hence we do not mention them in the text). Whether it is a consequence of our particular construction or there is a deeper (topological or geometric) obstruction to constructing non-trivial principal circle bundles over $\RR\PP_q^2(l;+)$ remains an interesting open question.

Throughout we work with involutive algebras over the field of complex numbers (but the algebraic results remain true for all fields of characteristic 0). All algebras are associative and have identity, we use the standard Hopf algebra notation and terminology and we always assume that the antipode of a Hopf algebra is bijective. All topological spaces are assumed to be Hausdorff.

\section{Review of bundles in non-commutative geometry.}\label{sec.rev}

The aim of this section is to set-out the topological concepts in relation to topological bundles, in particular principal bundles. The classical connection is made for interpreting topological concepts in an algebraic setting, providing a manageable methodology for performing calculations. In particular, the connection between principal bundles in topology and the algebraic Hopf-Galois condition is described. 

\subsection{Topological aspects of bundles.}

As a natural starting point, bundles are defined and topological properties are described. The principal map is defined and shown that injectivity is equivalent to the freeness condition. The image of the canonical map is deduced and necessary conditions are imposed to ensure the bijectivity of this map. The detailed account of the material presented in this section can be found in \cite{BauHaj:non}.

\begin{definition}\rm
A {\em bundle} is a triple $(E,\pi,M)$ where $E$ and $M$ are topological spaces and $\pi:E \rightarrow M$ is a continuous surjective map. Here $M$ is called the {\em base space}, $E$ the {\em total space} and $\pi$ the {\em projection} of the bundle.
\end{definition}

For each $m \in M$, the {\em fibre} over $m$ is the topological space $\pi^{-1}(m)$, i.e.\ the points on the total space which are projected, under $\pi$, onto the point $m$ in the base space. A bundle whose fibres are homeomorphic which satisfies a condition known as local triviality are known as fibre bundles. This is formally expressed in the next definition.

\begin{definition}\rm 
A {\em fibre bundle} is a triple $(E, \pi, M, F)$ where $(E, \pi, M)$ is bundle and $F$ is a topological space such that $\pi^{-1}(m)$ are homeomorphic to $F$ for each $m \in M$. Furthermore, $\pi$ satisfies the local triviality condition.
\end{definition}

The local triviality condition is satisfied if for each $x \in E$, there is an open neighourhood $U \subset B$ such that $\pi^{-1}(U)$ is homeomorphic to the product space $U \times F$, in such a way that $\pi$ carries over to the projection onto the first factor. That is the following diagram commutes:
$$
\xymatrix {{\pi^{-1}(U)} \ar[d]_{\pi} \ar[rr]^{\phi} && U \times F \ar[dll]^{p_1} \\
 U.}
$$
The map $p_1$ is the natural projection $U \times F \rightarrow U$ and $\phi : \pi^{-1}(U) \rightarrow U \times F$ is a homeomorphism.

\begin{example}\rm  An example of a fibre bundle which is non-trivial, i.e not a global product space, is the M\"obius strip. It has a circle that runs lengthwise through the centre of the strip as a base B and a line segment running vertically for the fibre F. The line segments are in fact copies of the real line, hence each $\pi^{-1}(m)$ is homeomorphic to $\RR$ hence the Mobius strip is a fibre bundle.
\end{example}

Let $X$ be a topological space which is compact and satisfies the Hausdorff property and G a compact topological group. Suppose there is a right action $\triangleleft :X \times G \rightarrow X$ of $G$ on $X$  and write  $x \triangleleft g=xg$.

\begin{definition} \rm
An action of $G$ on $X$ is said to be {\em free} if $x g=x$ for any $x \in X$ implies that $g=e$, the group identity.
\end{definition}

With an eye on algebraic formulation of freeness, the {\em principal map} $F^G:X \times G \rightarrow X \times X$ is defined as $(x,g) \mapsto (x,xg)$.

\begin{proposition} \label{inj map}
$G$ acts freely on $X$ if and only if $F^G$ is injective.
\end{proposition}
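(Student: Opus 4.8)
The plan is to prove the biconditional by unwinding the definitions of freeness and injectivity directly, treating the two implications separately. Recall that $F^G(x,g) = (x, xg)$, so the statement reduces to an elementary set-theoretic equivalence. First I would prove that if $G$ acts freely then $F^G$ is injective. Suppose $F^G(x,g) = F^G(x',g')$; comparing first coordinates gives $x = x'$, and comparing second coordinates gives $xg = xg'$. The aim is to conclude $g = g'$. Acting on the right by $g'^{-1}$ yields $x(g g'^{-1}) = x$, so by freeness applied to the element $gg'^{-1}$ we obtain $gg'^{-1} = e$, hence $g = g'$. This shows $(x,g) = (x',g')$, so $F^G$ is injective.

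For the converse, I would prove the contrapositive-style equivalence directly. Assume $F^G$ is injective, and suppose $xg = x$ for some $x \in X$ and $g \in G$. The goal is to deduce $g = e$. The key observation is that $xg = x = xe$, so that
\begin{equation*}
F^G(x,g) = (x, xg) = (x, x) = (x, xe) = F^G(x,e).
\end{equation*}
Injectivity of $F^G$ then forces $(x,g) = (x,e)$, whence $g = e$. Since $x$ and $g$ were arbitrary subject to $xg = x$, this is exactly the freeness condition, completing the converse.

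The argument is essentially immediate once the definitions are juxtaposed, so there is no serious obstacle; the only point requiring a little care is the direction of the group action (writing $x \triangleleft g = xg$ as a genuine right action, so that $x(gg'^{-1}) = (xg)g'^{-1}$ and $xe = x$ hold). I would make sure to invoke precisely the right-action axioms and the existence of inverses in the group $G$, since these are what convert the equation $xg = xg'$ into $g = g'$. No topological hypotheses (compactness, Hausdorffness) are needed for this proposition; they are recorded in the ambient setup but play no role here, and I would not invoke them in the proof.
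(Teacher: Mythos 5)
Your proof is correct and follows essentially the same route as the paper: the forward direction cancels $g'^{-1}$ and invokes freeness on $gg'^{-1}$, and the converse compares $F^G(x,g)$ with $F^G(x,e)$ using $x=xe$ and injectivity. Your observation that no topological hypotheses are needed is also consistent with the paper's purely set-theoretic argument.
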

\begin{proof}
``$\Longleftarrow$" Suppose the action is free, hence $xg=x$ implies that $g=e$. If $(x,xg)=(x',x'g')$, then $x=x'$ and $xg=xg'$. Applying the action of $g'^{-1}$ to both sides of $xg=xg'$ we get $x(gg'^{-1})=x$, which implies $gg'^{-1}=e$ by the freeness property, concluding $g=g'$ and $F^G$ is  injective as required.

``$\Longrightarrow$" Suppose $F^G$ is injective, so $F^G(x,g)=F^G(x',g')$ or $(x,xg)=(x',x'g')$ implies $x=x'$ and $g=g'$. Since $x=xe$ from the properties of the action, if $x=xg$ then $g=e$ from the injectivity property.
\end{proof}

Since $G$ acts on $X$ we can define the quotient space $X/G$,
$$
Y=X/G:=\{ [x]:x \in X \}, \qquad \mbox{where} \qquad [x]=xG=\{xg: g \in G \}.
$$
The sets $xG$ are called the {\em orbits} of the points $x$. They are defined as the set of elements in $X$ to which $x$ can be moved by the action of elements of $G$. The set of orbits of $X$ under the action of $G$ forms a partition of $X$, hence we can define the equivalence relation on $X$ as,
$$
x \sim y \Longleftrightarrow \exists g \in G \ \ \mbox{such that} \ \ xg=y.
$$ 
The equivalence relation is the same as saying $x$ and $y$ are in the same orbit, i.e., $xG=yG$. Given any quotient space, then there is a canonical surjective map 
$$
\pi:X \rightarrow Y=X/G, \qquad x \mapsto xG=[x],
$$
which maps elements in $X$ to the their class of orbits. We define the pull-back along this map $\pi$ to be the set
$$
X \times_Y X:=\{(x,y) \in X \times X: \pi(x)=\pi(y) \}.
$$
As described above, the image of the principal map $F^G$ contains elements of $X$ in the first leg and the action of $g \in G$ on $x$ in the second leg. To put it another way, the image records elements of $x \in X$ in the first leg and all the elements in the same orbit as this $x$ in the second leg. Hence we can identify the image of the canonical map as the pull back along $\pi$, namely $X \times_Y X$. This is formally proved as a part of the following proposition.

\begin{proposition}\label{prop free class}
$G$ acts freely on $X$ if and only if the map 
$$
F_X^G:X \times G \rightarrow X \times_Y X, \qquad (x,g) \mapsto (x,xg),
$$ 
is bijective. 
\end{proposition}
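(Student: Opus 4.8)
The plan is to reduce the claim to Proposition~\ref{inj map}, which already characterises freeness as injectivity of the principal map $F^G$, and then to upgrade injectivity to bijectivity by observing that the codomain has been cut down precisely to the image. First I would note that $F_X^G$ and $F^G$ have the same rule $(x,g)\mapsto (x,xg)$; they differ only in their declared codomain, $X\times_Y X$ versus $X\times X$. So the strategy is: show that $F_X^G$ is well defined (its image genuinely lands in $X\times_Y X$), that injectivity of $F_X^G$ is equivalent to freeness, and that $F_X^G$ is surjective onto $X\times_Y X$.

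For the forward direction, suppose $G$ acts freely. Well-definedness is immediate: for any $(x,g)$ we have $\pi(x)=xG=(xg)G=\pi(xg)$, since $xg$ lies in the same orbit as $x$, so $(x,xg)\in X\times_Y X$. Injectivity of $F_X^G$ then follows verbatim from the ``$\Longleftarrow$'' argument of Proposition~\ref{inj map}, since adding constraints to the codomain cannot destroy injectivity. The new content is surjectivity: given $(x,y)\in X\times_Y X$, the condition $\pi(x)=\pi(y)$ means $xG=yG$, i.e.\ $x$ and $y$ lie in the same orbit, so by definition of the orbit there exists $g\in G$ with $xg=y$. Then $F_X^G(x,g)=(x,xg)=(x,y)$, so every element of $X\times_Y X$ is hit. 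Hence $F_X^G$ is a bijection.

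For the converse, suppose $F_X^G$ is bijective; in particular it is injective. Since $F_X^G$ agrees with $F^G$ on rule and domain, its injectivity is exactly the injectivity of $F^G$ as a map into $X\times X$ (an element that is injective after corestriction to a subset is injective into the larger set). By Proposition~\ref{inj map} this forces the action to be free. Equivalently one can argue directly as in the ``$\Longrightarrow$'' half of Proposition~\ref{inj map}: if $xg=x$ then $F_X^G(x,g)=(x,x)=F_X^G(x,e)$, and injectivity gives $g=e$.

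I do not expect a genuine obstacle here, as the statement is essentially Proposition~\ref{inj map} repackaged. The only point requiring a little care is bookkeeping about the codomain: one must separately verify that $F_X^G$ really takes values in $X\times_Y X$ (so that the map is defined at all), and one must recognise that the surjectivity claim is precisely the assertion, discussed informally before the statement, that the image of the principal map coincides with the pull-back $X\times_Y X$. The mild subtlety is thus conceptual rather than computational: the content of bijectivity over injectivity is entirely carried by the surjectivity statement, which unpacks the definition of the orbit equivalence relation defining $X\times_Y X$.
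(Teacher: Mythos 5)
Your proposal is correct and follows essentially the same route as the paper's own proof: well-definedness via the orbit observation, reduction of the injectivity/freeness equivalence to Proposition~\ref{inj map}, and surjectivity by unpacking $\pi(x)=\pi(y)$ into $y=xg$ for some $g\in G$. The only cosmetic difference is that you split the argument into two implications, whereas the paper notes that surjectivity holds unconditionally (as your own argument in fact shows, since it never uses freeness).
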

\begin{proof}
First note that the map $F_X^G$ is well-defined since the elements $x$ and $xg$ are in the same orbit hence map to the same equivalence class under $\pi$. Using Proposition~\ref{inj map} we can deduce that the injectivity of $F_X^G$ is equivalent to the freeness of the action. Hence if we can show that $F_X^G$ is surjective the proof is complete.

Take $(x,y) \in X \times_Y X$.  This means $\pi(x)=\pi(y)$ which implies $x$ and $y$ are in the same equivalence class, which in turn means they are in the same orbit. We can therefore deduce that $y=xg$ for some $g \in G$. So, $(x,y)=(x,xg)=F_X^G(x,g)$ implying $(x,y) \in \im F_X^G$. Hence $\im F_X^G=X \times_Y X$ completing the proof.
\end{proof}

\begin{definition} \label{prin act def} \rm An action of $G$ on $X$ is said to be {\em principal} if the map $F^G$ is both injective and proper, i.e.\ it is injective, continuous and such that the inverse image of a compact subset is compact. 
\end{definition}

Since the injectivity and freeness condition are equivalent we can interpret principal actions as both free and proper actions. We can also deduce that these types of actions give rise to homeomorphisms $F_X^G$ from $X \times G$ onto the space $X \times_{X/G} X$. Principal actions lead to the concept of topological principle bundles.

\begin{definition} \rm
A {\em principal bundle} is a quadruple $(X,\pi,M,G)$ such that 

(a) $(X,\pi,M)$ is a bundle and $G$ is a topological group acting continuously on $X$ with action $\triangleleft:X \times G \rightarrow X$, $x \triangleleft g=xg$; 

(b) the action $\triangleleft$ is principal; 

(c) $\pi(x)=\pi(y) \Longleftrightarrow \exists g \in G$ such that $y=xg$; 

(d) the induced map $X/G \rightarrow M$ is a homeomorphism.
\end{definition}

The first two properties tell us that principal bundles are bundles admitting a principal action of a group $G$  on the total space $X$, i.e.\ principal bundles correspond to principal actions. By Definition $(\ref{prin act def})$, principal actions occur when the principal map is both injective and proper, or equivalently, when the action is free and proper. The third property ensures that the fibres of the bundle correspond to the orbits coming from the action and the final property implies that the quotient space can topologically be viewed as the base space of the bundle.

\begin{example}\rm 
Suppose $X$ is a topoplogical space and $G$ a topological group which acts on $X$ from the right. The triple $(X, \pi, X/G)$ where $X/G$ is the orbit space and $\pi$ the natural projection is a bundle. A principal action of $G$ on $X$ makes the quadruple $(X,\pi,X/G,G)$ a principal bundle. 
\end{example} 

We describe a principal bundle $(X, \pi, Y,G)$ as a $G$-principal bundle over $(X, \pi, Y)$, or $X$ as a $G$-principal bundle over $Y$.

\begin{definition}\rm 
A {\em vector bundle} is a bundle $(E,\pi,M)$ where each fibre $\pi^{-1}(m)$ is endowed with a vector space structure such that addition and scalar multiplication are continous maps.
\end{definition}

Any vector bundle can be understood as a bundle associated to a principal bundle in the following way. Consider a $G$-principal bundle $(X, \pi, Y,G)$ and let $V$ be a representation space of $G$, i.e.\ a (topological) vector space with a (continuous) left $G$-action $\triangleright: G\times V\to V$, $(g,v)\mapsto g\triangleright v$.  Then $G$ acts from the right on $X\times V$ by
$$
(x,v)\triangleleft g := (xg, g^{-1}\triangleright v), \quad \mbox{for all $x\in X$, $v\in V$ and $g\in G$}. 
$$
We can define
$
E = (X\times V)/G,
$
and  a surjective (continuous map)
$
\pi_E: E\to Y$, $(x,v)\triangleleft G\mapsto \pi(x),
$
and thus have a fibre bundle $(E,\pi_E,Y,V)$. In the case where $V$ is a vector space, we assume that $G$ acts linearly on $V$.

\begin{definition}  \rm A {\em section} of a bundle $(E,\pi_E, Y)$ is a
continuous map $s: Y\to E$ such that, for all $y\in Y$,
$$
\pi_E (s(y)) = y,
$$
i.e.\ a section is simply a section of the morphism $\pi_E$. The set of sections of $E$ is denoted by $\Gamma(E)$.
\end{definition}

\begin{proposition} \label{sec.as}   Sections in a fibre bundle $(E,\pi_E,Y,V)$ associated to a principal $G$-bundle $X$ are in bijective correspondence with (continuous) maps $f: X\to V$ such that
$$
f(xg) = g^{-1}\triangleright f(x).
$$
All such $G$-equivariant maps are denoted by $\rhom G X V$.
\end{proposition}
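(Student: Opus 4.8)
The plan is to construct explicit maps in both directions between $\Gamma(E)$ and $\rhom G X V$ and to verify that they are mutually inverse, with the freeness of the action doing the essential work. Starting from a $G$-equivariant $f\in\rhom G X V$, I would define a candidate section $s_f\colon Y\to E$ by $s_f(y)=(x,f(x))\triangleleft G$, where $x\in X$ is any point with $\pi(x)=y$; such an $x$ exists since $\pi$ is surjective. The first thing to check is independence of the choice of $x$: if $\pi(x)=\pi(x')$ then property (c) of a principal bundle gives $x'=xg$ for some $g\in G$, and then
$$
(x',f(x'))\triangleleft G=(xg,f(xg))\triangleleft G=(xg,g^{-1}\triangleright f(x))\triangleleft G=(x,f(x))\triangleleft G,
$$
using equivariance of $f$ and the defining formula $(x,v)\triangleleft g=(xg,g^{-1}\triangleright v)$. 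Since $\pi_E(s_f(y))=\pi(x)=y$ by the definition of $\pi_E$, the map $s_f$ is a genuine section.

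Conversely, from a section $s\colon Y\to E$ I would produce an equivariant map $f_s\colon X\to V$. Fixing $x\in X$, the point $s(\pi(x))$ lies in the fibre $\pi_E^{-1}(\pi(x))$, which consists precisely of the orbits of pairs whose first leg projects to $\pi(x)$; by property (c) every such orbit has a representative with first leg exactly $x$, and by freeness (Proposition~\ref{inj map}) this representative is unique, since $(x,v)\triangleleft g=(x,v')$ forces $xg=x$, hence $g=e$ and $v=v'$. Thus there is a unique $f_s(x)\in V$ with $s(\pi(x))=(x,f_s(x))\triangleleft G$. Equivariance then follows from this uniqueness: since $\pi(xg)=\pi(x)$ we have $(xg,f_s(xg))\triangleleft G=s(\pi(x))=(x,f_s(x))\triangleleft G=(xg,g^{-1}\triangleright f_s(x))\triangleleft G$, whence $f_s(xg)=g^{-1}\triangleright f_s(x)$. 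The assignments $f\mapsto s_f$ and $s\mapsto f_s$ are mutually inverse almost by construction: $f_{s_f}=f$ because $s_f(\pi(x))=(x,f(x))\triangleleft G$ identifies $f(x)$ as the unique $V$-component over $x$, and $s_{f_s}=s$ because $(x,f_s(x))\triangleleft G=s(\pi(x))$ for every $x$.

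The bijection above is purely set-theoretic; the main obstacle will be to promote it to a statement about \emph{continuous} maps, and this is where local triviality of the principal bundle is needed. Concretely, I would fix a trivializing open cover of $Y$ over which $\pi\colon X\to Y$ restricts to projections $U\times G\to U$; on each such $U$ a continuous local section of $\pi$ is available, and composing it with $f$ (respectively, reading off the $V$-component of $s$) exhibits $s_f$ and $f_s$ as continuous locally, hence continuous globally, since continuity is a local property and $E$ carries the quotient topology. Checking that these locally defined expressions glue correctly---that is, that the transition functions of $X$ act compatibly through $\triangleright$ on $V$---is the one genuinely technical verification; everything else reduces to the freeness already encoded in Proposition~\ref{inj map} and Proposition~\ref{prop free class}.
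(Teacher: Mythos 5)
Your proof is correct and takes essentially the same route as the paper: the paper also defines $s_f\colon xG\mapsto (x,f(x))\triangleleft G$, recovers $f_s$ from a section $s$ as the unique $v\in V$ with $s(xG)=(x,v)\triangleleft G$, and invokes freeness in exactly the way you do to get uniqueness and equivariance. You merely add detail the paper leaves implicit --- the well-definedness of $s_f$, the explicit mutual-inverse check, and the continuity sketch via local triviality --- none of which changes the underlying argument.
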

\begin{proof}
Remember that $Y=X/G$. Given a map $f\in \rhom G X V$, define the section
$
s_f : Y\to E$ , $xG\mapsto (x,f(x))\triangleleft G.$

Conversely, given $s\in \Gamma(E)$, define
$f_s: X\to V$ by assigning to $x\in X$ a unique $v\in V$ such that $s(xG) = (x,v)\triangleleft G$. Note that $v$ is unique, since if $(x,w) = (x,v)\triangleleft g$, then $xg=x$ and $w = g^{-1}\triangleright v$. Freeness  implies that $g=e$, hence $w=v$. The map $f_s$ has the required equivariance property, since the element of $(X\times V)/G$ corresponding to $xg$ is $g^{-1}\triangleright v$.
\end{proof}

\subsection{Non-commutative principal and associated bundles.}\label{sec.non.pri}

Consider (complex) algebras $\cO(X)$, $\cO(Y)$ and $\cO(G)$  of  coordinate functions on compact spaces $X,Y$ and $G$ considered in the previous subsection. Put $A=\cO(X)$ and $H=\cO(G)$ and note the identification $\cO(G \times G) \cong \cO(G) \otimes \cO(G)$. Through this identification, $\cO(G \times G)$ is a Hopf algebra with 
{comultiplication:} $f \mapsto (\Delta f),$ $(\Delta f)(g,h)=f(gh)$, counit $\eps: \cO(G) \rightarrow \CC$, $\eps(f)=f(e)$, and the antipode $
S:H \rightarrow H$, $(Sf)(g)=f(g^{-1})$.

Using the fact that $G$ acts on $X$ we can construct a right coaction of $H$ on $A$
  by $
\roA:A \rightarrow A \otimes H$, $\roA(a)(x,g)=a(xg)$. This coaction is an algebra map due to the commutativity of the algebras of functions involved.

We have viewed the spaces of functions on  $X$ and $G$, next we view the space of functions on Y,  $B:=\cO(Y)$, where $Y=X/G$.  $B$ is a sub-algebra of $A$ by 
$$
\pi^*:B \rightarrow A, \qquad b \mapsto b \circ \pi,
$$
where $\pi$ is the canonical surjection defined above. The map $\pi^*$ is injective, since $b \neq b'$ in $\cO(X/G)$ means there exists at least one orbit $xG=[x]$ such that $b([x]) \neq b'([x])$, but $\pi(x)=[x]$, so $b(\pi(x)) \neq b'(\pi(x))$ which implies $\pi^*(b) \neq \pi^*(b')$. Therefore, we can identify $B$ with $\pi^*(B)$. Furthermore, $a \in \pi^*(B)$ if and only if 
$$a(xg) = a(x),$$
 for all $x\in X$, $g\in G$. This is the same as
 $$
 \roA(a)(x,g) = (a\ot 1)(x,g),
 $$
 for all $x\in X$, $g\in G$, where $1: G\to \CC$ is the unit function $1(g) =1$ (the identity element of $H$). Thus we can identify $B$ with the {\em coinvariants} of the coaction $\roA$:
 $$
 B = A^{coH} := \{ a\in A\; |\; \roA(a) = a\ot 1\}.
 $$

Since $B$ is a subalgebra of $A$, it acts on $A$ via the inclusion map
$
(ab)(x) = a(x)b(\pi(x))$, $(ba)(x) = b(\pi(x))a(x)$. 
We can identify $\cO(X\times_Y X)$ with $\cO(X)\ot_{\cO(Y)}\cO(X) = A\ot_B A$ by the map
$$
\theta (a\ot_B a')(x,y) = a(x)a'(y),\quad  \mbox{(with $~\pi(x) = \pi(y)$).}
$$
Note that $\theta$ is well defined because $\pi(x) =\pi(y)$. Proposition~\ref{prop free class} immediately yields

\begin{proposition}\label{prop free quan}
The action of $G$ on $X$ is free if and only if  $F_X^{G*}: \cO(X \times_Y X) \rightarrow \cO(X \times G)$, $f \mapsto f \circ F_X^{G}$ is bijective.
\end{proposition}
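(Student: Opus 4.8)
The plan is to dualise Proposition~\ref{prop free class} through the contravariant passage from spaces to their coordinate algebras, exploiting that $F_X^{G*}$ is, by its very definition, the pullback $f\mapsto f\circ F_X^G$ along the map $F_X^G$. The whole argument collapses to a single chain of equivalences,
$$
\text{action free}\iff F_X^G \text{ bijective}\iff F_X^G \text{ homeomorphism}\iff F_X^{G*}\text{ bijective},
$$
whose first link is Proposition~\ref{prop free class} itself. So the work lies entirely in the second and third links.

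First I would observe that $F_X^G:X\times G\to X\times_Y X$ is continuous, being assembled from the continuous action $\triangleleft$ and the identity on $X$, and that it runs between compact Hausdorff spaces: its source $X\times G$ is a product of compact Hausdorff spaces, while its target $X\times_Y X$ is the preimage of the (closed) diagonal of $Y\times Y$ under $\pi\times\pi$, hence a closed, and therefore compact, subspace of $X\times X$. A continuous bijection from a compact space to a Hausdorff space is automatically a homeomorphism, so the second link comes for free: once Proposition~\ref{prop free class} gives bijectivity, it upgrades to a homeomorphism at no cost.

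Next I would invoke the contravariant functoriality of $\cO(-)$ in the form appropriate to the maps at hand: for a continuous map $\phi$ between compact Hausdorff spaces, $\phi$ is a homeomorphism if and only if the pullback $\phi^*:\cO(\cod\phi)\to\cO(\dom\phi)$ is an isomorphism of unital involutive algebras. Applied to $\phi=F_X^G$ this identifies ``$F_X^G$ is a homeomorphism'' with ``$F_X^{G*}$ is bijective'', closing the chain and yielding the claim.

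The main obstacle is precisely this last link, i.e.\ the confidence that $\cO(-)$ detects homeomorphisms. The dual dictionary one actually needs reads: $F_X^{G*}$ is injective iff $F_X^G$ has closed (hence full) image, i.e.\ is surjective, and $F_X^{G*}$ is surjective iff $F_X^G$ is injective. The first equivalence rests on $\cO$ separating points from closed sets together with compactness of $\im F_X^G$ (a Urysohn/Stone--Weierstrass input), and the second on an extension property (Tietze), so that a function on $\im F_X^G$ extends to all of $X\times_Y X$. Since the ambient spaces are assumed compact Hausdorff, both inputs are available, and the three equivalences combine to give the stated biconditional.
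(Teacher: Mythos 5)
Your proof is correct and takes essentially the same route as the paper: the paper offers no separate argument at all, merely asserting that Proposition~\ref{prop free class} ``immediately yields'' the statement, i.e.\ precisely the dualisation you carry out. The only difference is that you make explicit the compact--Hausdorff bookkeeping the paper suppresses (continuous bijection from a compact space to a Hausdorff space is a homeomorphism, and $\cO(-)$ reflects isomorphisms via Urysohn/Tietze), which is a faithful filling-in of the paper's one-line justification rather than a different method.
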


 In view of the  definition of the coaction of $H$ on $A$, we can identify 
${F_X^G}^*$ with the {\em canonical map}
$$
\can: a\ot_Ba'\mapsto [(x,g)\mapsto a(x)a'(x.g)] = a\roA(a').
$$
Thus the action of $G$ on $X$ is free if and only if this purely algebraic map is bijective. In the classical geometry case we take  $A=\cO(X)$, $H =\cO(G)$ and $B=\cO(X/G)$, but in general there is no need to restrict oneself to commutative algebras (of functions on topological spaces). In full generality this leads to the following definition.

\begin{definition}\label{def.H=G} \rm (Hopf-Galois Extensions) Let $H$ be a Hopf algebra and $A$ a right $H$-comodule algebra with coaction given by $\roA:A \rightarrow A \otimes H$. Define $B:= \{ b \in A \; |\; \roA(b)=b \otimes 1 \}$. We say that $B \subseteq A$ is a {\em Hopf-Galois extension} if the left $A$-module, right $H$-comodule map
$$
\can: A \otimes_B A \rightarrow A \otimes H, \qquad a \otimes_B a' \mapsto a\roA( a')
$$
is an isomorphism.
\end{definition}

Proposition~\ref{prop free quan} tells us that when viewing bundles from an algebraic perpespective, the freeness condition is equivalent to the Hopf-Galois extension property. Hence, the Hopf-Galois extension condition is a necessary condition to ensure a bundle is principal. Not all information about a topological space is encoded in a coordinate algebra, so to make a fuller reflection of the richness of the classical notion of a principal bundle we need to require  conditions additional to the Hopf-Galois property. 

\begin{definition}\label{def.princ} \rm
Let $H$ be a Hopf algebra with bijective antipode and let $A$ be a right $H$-comodule algebra with coaction $\roA: A\to A\ot H$. Let $B$ denote the coinvariant subalgebra of $A$. We say that $A$ is a {\em principal $H$-comodule algebra} if:

(a)  $B \subseteq A$ is a Hopf-Galois extension; 

(b)  the multiplication map
$
B\ot A \to A$, $b\ot a\mapsto ba$, 
splits as a left $B$-module and right $H$-comodule map (the equivariant projectivity condition).
\end{definition}
As indicated already in \cite{Sch:pri}, \cite{BrzMaj:gau} or \cite{Haj:str}, principal comodule algebras should be understood as principal bundles in noncommutative geometry. In particular, if $H$ is a $C^*$-algebra of functions on a quantum group \cite{Wor:com}, then the existence of the Haar measure together with the results of \cite{Sch:pri} mean that the freeness of the coaction implies its principality. 

The following characterisation of principal comodule algebras \cite{DabGro:str}, \cite{BrzHaj:Che} gives an effective method for proving the principality of coaction.

\begin{proposition}\label{prop str con}
A right $H$-comodule algebra $A$ with coaction $\roA: A\to A\ot H$  is principal if and only if it admits a {\em strong connection form}, that is if there exists a map
$
\omega:H\longrightarrow A\otimes A,
$
such that
\begin{subequations}
\label{strong}
\begin{equation}
\omega(1)=1\otimes 1, \label{strong1} 
\end{equation}
\begin{equation}
\mu\circ \omega = \eta \circ \varepsilon, \label{strong2}
\end{equation}
\begin{equation}
 (\omega\otimes\id)\circ\Delta  = (\id\otimes \varrho)\circ \omega , \label{strong3}
\end{equation}
\begin{equation}
(S\otimes \omega)\circ\Delta = (\sigma\otimes \id)\circ (\varrho\otimes \id)\circ \omega .  \label{strong4}
\end{equation}
\end{subequations}
Here $\mu: A\ot A\to A$ denotes the multiplication map, $\eta: \CC\to A$ is the unit map, $\Delta: H\to H\ot H$ is the comultiplication, $\eps: H\to\CC$ counit and $S: H\to H$ the (bijective) antipode of the Hopf algebra $H$, and $\sigma : A\ot H \to H\ot A$ is the flip.
\end{proposition}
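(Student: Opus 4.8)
The plan is to prove the two implications separately, with the converse (existence of $\omega$ $\Rightarrow$ principality) handled by direct construction and the forward implication by lifting the translation map. Throughout I write $\roA(a)=a\sw0\ot a\sw1$ and $\omega(h)=\omega(h)\su1\ot\omega(h)\su2$, and read the axioms as: (\ref{strong1}) unital normalisation, (\ref{strong2}) the statement that $\mu\circ\omega$ collapses to $\eta\circ\eps$, (\ref{strong3}) right $H$-colinearity of the second leg, and (\ref{strong4}) $S$-twisted right $H$-colinearity of the first leg.

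For ``$\Leftarrow$'' I would exhibit an explicit inverse of $\can$ and an explicit splitting of $\mu:B\ot A\to A$, thereby checking (a) and (b) of Definition~\ref{def.princ}. Define
$$
\kappa:A\ot H\to A\ot_B A,\qquad a\ot h\mapsto a\,\omega(h)\su1\ot_B\omega(h)\su2 .
$$
The identity $\can\circ\kappa=\id$ is a one-line computation: feed (\ref{strong3}) into the definition $\can(a\ot_B a')=a\roA(a')$, then collapse the resulting $\mu\circ\omega$ by (\ref{strong2}) and the counit axiom. For the equivariant projectivity I set
$$
s:A\to A\ot A,\qquad s(a)=a\sw0\,\omega(a\sw1)\su1\ot\omega(a\sw1)\su2 ,
$$
for which (\ref{strong2}) gives $\mu\circ s=\id$, coinvariance of $B$ gives left $B$-linearity, and (\ref{strong3}) gives right $H$-colinearity.

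Both halves of ``$\Leftarrow$'' rest on a single non-formal point, the \emph{translation lemma}: the element $s(a)$ is coinvariant for the coaction on its first leg, i.e.\ $s(a)\in B\ot A$. Granting it, $s$ has the correct codomain and is a genuine splitting, and writing $s(a)=\sum_i b_i\ot a_i$ with $b_i\in B$ one gets in $A\ot_B A$ that $a\sw0\omega(a\sw1)\su1\ot_B\omega(a\sw1)\su2=1\ot_B\mu(s(a))=1\ot_B a$, which is exactly $\kappa\circ\can=\id$; with $\can\circ\kappa=\id$ this makes $\can$ bijective. The lemma itself is where (\ref{strong4}) enters: applying $\roA$ to the first leg, expanding by coassociativity and the fact that $\roA$ is an algebra map, substituting (\ref{strong4}), and finally cancelling the surplus $H$-factor through the antipode identity $h\sw1 S(h\sw2)\ot h\sw3=1\ot h$ yields coinvariance. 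Note that only the defining antipode axiom, not bijectivity of $S$, is needed here.

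For ``$\Rightarrow$'' I would start from the data supplied by principality: the Hopf--Galois property of Definition~\ref{def.H=G} gives the translation map $\tau=\can^{-1}(1\ot-):H\to A\ot_B A$, $\tau(h)=h\su1\ot_B h\su2$, while equivariant projectivity gives a left $B$-linear right $H$-colinear section $s$ of $\mu:B\ot A\to A$. The section induces a well-defined splitting $\ell:A\ot_B A\to A\ot A$ of the canonical surjection, $\ell(a\ot_B a')=(a\ot 1)\,s(a')$ (well-definedness uses left $B$-linearity of $s$), and I would set $\omega=\ell\circ\tau$, i.e.\ $\omega(h)=(h\su1\ot 1)\,s(h\su2)$. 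Then (\ref{strong2}) follows from $h\su1 h\su2=\eps(h)1$ (a consequence of $\can\circ\tau=\id$ and the counit axiom), (\ref{strong1}) from unitality of $\tau$ after normalising $s$ so that $s(1)=1\ot 1$, and (\ref{strong3}) from right $H$-colinearity of $\tau$ and of $s$. The main obstacle is (\ref{strong4}): it demands the $S$-twisted colinearity of the first leg, which forces one to establish the second translation identity for $\tau$ (the one relating $(h\su1)\sw0\ot_B h\su2\ot(h\su1)\sw1$ to $\tau(h\sw2)$ and $S(h\sw1)$) and then to check that the lift $\ell$ transports it faithfully to $A\ot A$. The proof of that translation identity is precisely the step requiring $S$ to be bijective---$S^{-1}$ is needed to solve for the first-leg coaction---which is why the equivalence is stated under the standing assumption that the antipode is bijective.
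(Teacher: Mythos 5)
Your constructions are exactly the paper's: in the ``if'' direction the inverse of $\can$ and the splitting of $B\ot A\to A$ are the composites $(\mu\ot\id)\circ(\id\ot\omega)$ and $(\mu\ot\id)\circ(\id\ot\omega)\circ\roA$, and in the ``only if'' direction $\omega$ is the lift of the translation map $\tau=\can^{-1}(1\ot -)$ through the splitting $s$. The paper merely writes these composites down; your added verifications are correct, in particular the key coinvariance lemma for the first leg of $s(a)$, which indeed follows from \eqref{strong4} together with the antipode axiom $h\sw 1S(h\sw 2)=\eps(h)1$ alone.

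There is, however, one genuine gap, and it sits in your treatment of \eqref{strong1} in the ``only if'' direction (the paper has the identical gap, silently): both your $\omega=\ell\circ\tau$ and the paper's composite satisfy $\omega(1)=s(1)$, and you dispose of this by ``normalising $s$ so that $s(1)=1\ot 1$'' as though it were routine. It is not. Left $B$-linearity forces $s(b)=b\cdot s(1)$ for all $b\in B$, so a unital splitting is a strictly stronger datum than an arbitrary one, and the obvious corrections fail: for instance $a\mapsto s(a)+1\ot a-s(1)\cdot(1\ot a)$ is colinear, unital and splits $\mu$, but is no longer left $B$-linear. Colinearity does give $s(1)\in B\ot B$, but extracting a unital equivariant splitting from an arbitrary one requires an actual argument that neither you nor the paper supplies. (A clean repair is to require $s(1)=1\ot 1$ in Definition~\ref{def.princ}(b); this costs nothing for the equivalence, because the splitting produced in your ``if'' direction, $s(a)=a\sw 0\,\omega(a\sw 1)\su 1\ot\omega(a\sw 1)\su 2$, is automatically unital.) A second, more minor point: your localisation of where bijectivity of $S$ enters is wrong. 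The translation-map identity expressing the first-leg coaction of $\tau(h)$ through $\tau(h\sw 2)$ and $S(h\sw 1)$ needs no $S^{-1}$: apply the injective map $\can\ot\id$ to both sides and use that $\roA$ is an algebra map, that $\can\circ\tau=1\ot(-)$, the colinearity of $\tau$, and the antipode axiom. In fact no step of the proof, as you have organised it, uses $S^{-1}$; bijectivity of the antipode is a standing hypothesis of Definition~\ref{def.princ}, not the engine behind \eqref{strong4}.
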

\begin{proof}
 If a strong connection form $\omega$ exists, then the inverse of the canonical map $\can$ (see Definition~\ref{def.H=G} ) is the composite
$$
\xymatrix{
A\ot H \ar[rr]^-{\id \ot \omega} && A\ot A\ot A \ar[rr]^{\mu \ot\id} && A\ot A \ar[r] & A\ot_B A},
$$
while the splitting of the multiplication map (see Definition~\ref{def.princ}~(b)) is given by
$$
\xymatrix{
A \ar[r]^-{\roA} & A\ot H \ar[rr]^{\id \ot\omega} && A\ot A\ot A \ar[rr]^-{\mu \ot\id} && B\ot  A}.
$$

Conversely, if $B\subseteq A$ is a principal comodule algebra, then $\omega$ is the composite
$$
\xymatrix{
H \ar[r]^-{\eta\ot\id} & A\ot H \ar[r]^-{\can^{-1}} & A\ot_B A \ar[r]^-{\id\ot s}& A\ot_B B\ot  A\ar[r]^-{\cong} & A\ot A},
$$
where $s$ is the left $B$-linear right $H$-colinear splitting of the multiplication $B\ot A\to A$.
\end{proof}

\begin{example}\label{ex cleft} \rm Let $A$ be a right $H$-comodule algebra. The space of $\CC$-linear maps $\rhom{}HA$ is an algebra with the {\em convolution product} 
$$
f\ot g \mapsto \mu\circ (f\ot g) \circ \Delta
$$
and unit $\eta\circ \eps$. $A$ is said to be {\em cleft} if there exists a right $H$-colinear map $j: H\to A$ that has an inverse in the convolution algebra $\rhom{}HA$ and is normalised so that $j(1)=1$. Writing $j^{-1}$ for the convolution inverse of $j$, one easily observes that
$$
\omega: H\to A\ot A, \qquad h\mapsto (j^{-1}\ot j)(\Delta(h)),
$$ 
is a strong connection form. Hence a cleft comodule algebra is a an example of a principal comodule algebra. The map $j$ is called a {\em cleaving map} or a {\em normalised total integral}.

In particular, if $j:H\to A$ is an $H$-colinear algebra map, then it is automatically convolution invertible (as $j^{-1} = j\circ S$) and normalised. A comodule algebra $A$ admitting such a map is termed a {\em trivial} principal comodule algebra.
\end{example}

\begin{example} \rm
Let $H$ be a Hopf algebra of the compact quantum group. By the Woronowicz theorem \cite{Wor:com}, $H$ admits an invariant Haar measure, i.e.\ a linear map $\Lambda: H \to \CC$ such that, for all $h\in H$,
$$
\sum h\sw 1 \Lambda (h\sw 2) = \eps(h), \qquad \Lambda(1) =1. 
$$
where $\Delta(h) = \sum h\sw 1 \ot h\sw 2$ is the Sweedler notation for the comultiplication. Next, assume  that the lifted canonical map:
\begin{equation}\label{bar can}
\overline{\can}: A\ot  A\to A\ot H, \qquad a\ot a'\mapsto a\varrho(a'),
\end{equation}
is surjective, and write 
$$
\ell: H \to A\ot A, \qquad \ell(h) = \sum \ell(h)\su 1 \ot \ell(h)\su 2,
$$
for the $\CC$-linear map such that $\overline{\can}(\ell(h)) =1\ot h$, for all $h\in H$. Then, by the Schneider theorem \cite{Sch:pri}, $A$ is a principal $H$-comodule algebra. Explicitly, a strong connection form is
$$
\omega(h) = \sum \Lambda\left(h\sw 1 \ell\left(h\sw 2\right)\su 1\sw 1\right)\Lambda\left(\ell\left(h\sw 2\right)\su 2\sw 1S\left(h\sw 3\right)\right)\ell(h\sw 2)\su 1\sw 0\ot \ell(h\sw 2)\su 2\sw 0,
$$
where the coaction is denoted by the Sweedler notation $\roA(a) = \sum a\sw 0\ot a\sw 1$; see \cite{BegBrz:exp}.
\end{example}
Having described non-commutative principal bundles, we can look at the associated vector bundles. First we look at the classical case and try to understand it purely algebraically. 
Start with a vector bundle $(E,\pi_E,Y,V)$ associated to a principal $G$-bundle $X$. Since $V$ is a vector representation space of $G$, also the set  $\rhom G X V$ is a vector space. Consequently $\Gamma(E)$ is a vector space. Furthermore, $\rhom G X V$ is a left module of $B =\cO(Y)$ with the action
$
(bf)(x) = b(\pi_E(x))f(x).
$
To understand better the way in which $B$-module  $\Gamma(E)$ is associated to the principal comodule algebra $\cO(X)$ we recall the notion of the cotensor product.
\begin{definition}  \rm Given a Hopf algebra $H$, right $H$-comodule $A$ with coaction $\roA$ and left $H$-comodule $V$ with coaction ${}^V\! \varrho$, the {\em cotensor product} is defined as an equaliser:
$$\xymatrix{ A\Box_HV \ar[r]& A\ot V
\ar@<0.5ex>[rr]^{\roA\ut \id\quad}\ar@<-0.5ex>[rr]_{\id \ut {}^V\! \varrho\quad}
& & A\ot 
 H\ot  V }.$$
 \end{definition}
 
If $A$ is an $H$-comodule  algebra, and $B=A^{coH}$, then $A\Box_HV$ is a left $B$-module with the action
$
b(a\Box v) = ba\Box v.
$
In particular, in the case of a principal $G$-bundle $X$ over $Y=X/G$, for any left $\cO(G)$-comodule $V$ the cotensor product $\cO(X)\Box_{\cO(G)}V$ is a left $\cO(Y)$-module.

Assume that $V$ is finite dimensional. Then the dual vector space $V$ is a left comodule of $\cO(G)$ with the coaction
$
{}^{V}\!\varrho : v\mapsto \sum v\sw{-1}\ot v\sw{0}
$
(summation implicit) determined by
$
\sum v\sw{-1}(g) v\sw{0} = g^{-1}\triangleright v.
$

\begin{proposition} \label{prop clas sec} The left $\cO(Y)$-module of sections $\Gamma(E)$ is isomorphic to the left $\cO(Y)$-module $\cO(X)\Box_{\cO(G)}V$.
\end{proposition}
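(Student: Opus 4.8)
The plan is to factor the desired isomorphism through the intermediate space $\rhom{G}{X}{V}$ of $G$-equivariant maps, which Proposition~\ref{sec.as} already identifies with $\Gamma(E)$. One checks this correspondence is in fact left $B$-linear, since the $B$-action on sections transports to the action $(bf)(x) = b(\pi(x))f(x)$ on equivariant maps described above. Thus it suffices to produce a left $\cO(Y)$-module isomorphism $\rhom{G}{X}{V} \cong \cO(X)\Box_{\cO(G)}V$, and the full statement then follows by composition.

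First I would exploit the finite-dimensionality of $V$. Fixing a basis $\{e_i\}$ of $V$, any map $f:X\to V$ decomposes uniquely as $f = \sum_i f_i\, e_i$ with coordinate functions $f_i:X\to\CC$; for maps in the coordinate algebra this identifies the relevant subspace of $\Map(X,V)$ with $A\ot V$, where $A=\cO(X)$, by sending $f$ to $\tilde f = \sum_i f_i\ot e_i$. This assignment is a linear isomorphism, and it is manifestly left $B$-linear for the action $b\cdot\big(\sum_i f_i\ot e_i\big) = \sum_i (bf_i)\ot e_i$, which matches the cotensor module structure $b(a\Box v) = ba\Box v$.

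The crux is to show that, under this identification, the equivariance condition $f(xg) = g^{-1}\triangleright f(x)$ is \emph{exactly} the equaliser condition defining $A\Box_H V$, namely $(\roA\ot\id)(\tilde f) = (\id\ot{}^V\!\varrho)(\tilde f)$. I would verify this by evaluating both sides as $V$-valued functions on $X\times G$, using $A\ot H\cong\cO(X\times G)$. On the left, $(\roA\ot\id)(\tilde f)$ at $(x,g)$ equals $\sum_i \roA(f_i)(x,g)\,e_i = \sum_i f_i(xg)\,e_i = f(xg)$, by the definition $\roA(a)(x,g)=a(xg)$. On the right, $(\id\ot{}^V\!\varrho)(\tilde f)$ at $(x,g)$ equals $\sum_i f_i(x)\sum (e_i)\sw{-1}(g)(e_i)\sw{0}$; invoking the defining relation $\sum v\sw{-1}(g)v\sw{0} = g^{-1}\triangleright v$ of the comodule structure ${}^V\!\varrho$ together with linearity of the $G$-action, this collapses to $g^{-1}\triangleright\big(\sum_i f_i(x)e_i\big) = g^{-1}\triangleright f(x)$. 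The two sides agree for all $(x,g)$ precisely when $f(xg)=g^{-1}\triangleright f(x)$, so $\tilde f$ lies in the equaliser if and only if $f$ is equivariant, and the equivariant maps are carried bijectively onto $\cO(X)\Box_{\cO(G)}V$.

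The main obstacle is bookkeeping rather than conceptual: one must keep the dualisation of the left $G$-action into the left $\cO(G)$-coaction ${}^V\!\varrho$ straight, since it is exactly the inverse $g^{-1}$ and the placement of the legs that make the equivariance and equaliser conditions coincide. One must also check that equivariant coordinate functions genuinely lie in $A=\cO(X)$, so that the finite-dimensional identification $\Map(X,V)\cong A\ot V$ restricts correctly to the subspaces in question. Granting these routine points, composing the left $B$-linear isomorphisms $\Gamma(E)\cong\rhom{G}{X}{V}\cong\cO(X)\Box_{\cO(G)}V$ completes the proof.
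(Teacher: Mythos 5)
Your proof is correct and takes essentially the same route as the paper's: both factor through the identification $\Gamma(E)\cong\rhom G X V$ from Proposition~\ref{sec.as} and then use a basis of the finite-dimensional space $V$ to match equivariant maps with elements of $\cO(X)\Box_{\cO(G)}V$ (your coordinate functions $f_i$ are exactly the paper's dual-basis components $v_i\circ f$ in $\theta(f)=\sum_i v_i\circ f\ot v^i$, and your identification $\Map(X,V)\cong A\ot V$ restricting to the equaliser is the paper's pair of mutually inverse maps $\theta$, $\theta^{-1}$). The only difference is one of detail: you spell out why equivariance coincides with the equaliser condition, a verification the paper compresses into ``one easily checks that the constructed maps are mutual inverses.''
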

\begin{proof}
First identify $\Gamma(E)$ with $\rhom G X V$. Let $\{v_i\in V^*,\; v^i\in V\}$ be a (finite) dual basis. Take $f\in \rhom G X V$, and define $\theta: \rhom G X V \to \cO(X)\Box_{\cO(G)}V$ by
$
\theta(f) = \sum_i v_i\circ f\ot v^i.
$

In the converse direction, define a left $\cO(Y)$-module map
$$
\theta^{-1}: \cO(X)\Box_{\cO(G)}V\to \rhom G X V, \qquad a\Box v\mapsto a(-)v.
$$
One easily checks that the constructed map are mutual inverses. 
\end{proof}

Moving away from commutative algebras of functions on topological spaces one uses Proposition~\ref{prop clas sec} as the motivation for the following definition.
\begin{definition}\label{def ass mod}\rm 
Let $A$ be a principal $H$-comodule algebra. Set $B=A^{coH}$ and let $V$ be a left $H$-comodule. The left $B$-module $\Gamma = A\Box_H V$ is called a {\em module    associated to the principal comodule algebra $A$}.
\end{definition}

$\Gamma$ is a projective left $B$-module, and if  $V$ is a finite dimensional vector space, then  $\Gamma$ is a finitely generated projective left $B$-module. In this case it has the meaning of a module of sections over a non-commutative vector bundle. Furthermore, its class gives an element in the $K_0$-group of $B$. If $A$ is a cleft principal comodule algebra, then every associated module is free, since $A\cong B\ot H$ as a left $B$-module and right $H$-comodule, so that
$$
\Gamma = A\Box_H V \cong (B\ot H)\Box_H V \cong B\ot (H\Box_H V) \cong B\ot  V.
$$

\section{Weighted circle actions on prolonged spheres.}

In this section we recall the definitions of algebras we study in the sequel. 

\subsection{Circle actions and $\ZZ$-gradings.}\label{section.zz}

The coordinate algebra of the circle or the group $U(1)$, $\cO(S^1) = \cO(U(1))$ can be identified with the $*$-algebra $\CC[u,u^*]$ of Laurent polynomials in a unitary variable $u$ (unitary means $u^{-1} = u^*$). As a Hopf $*$-algebra $\CC[u,u^*]$, is generated by the grouplike element $u$, i.e.\
$$
\Delta(u) = u\ot u, \qquad \eps(u) = 1, \qquad S(u) = u^*,
$$
and thus it can be understood as the group algebra $\CC\ZZ$. As a consequence of this interpretation of $\CC[u,u^*]$, an algebra $A$ is a $\CC[u,u^*]$-comodule algebra if and only if $A$ is a $\ZZ$-graded algebra,
$$
A = \bigoplus_{n\in \ZZ} A_n, \qquad A_n:= \{a\in A\; |\; \roA(a) = a\ot u^n\}, \qquad A_mA_n\subseteq A_{m+n}.
$$
$A_0$ is the coinvariant subalgebra of $A$. Since $\CC[u,u^*]$ is spanned by grouplike elements, any convolution invertible map $j: \CC[u,u^*]\to A$ must assign a  unit (invertible element) of $A$ to $u^n$. Furthermore,  colinear maps are simply the $\ZZ$-degree preserving maps, where $\deg(u) = 1$. Put together, convolution invertible colinear maps $j: \CC[u,u^*]\to A$ are in one-to-one correspondence with sequences 
$$
(a_n\; :\; n\in \ZZ, \;a_n \mbox{ is a unit in $A$}, \; \deg(a_n) = n).
$$

\subsection{The $\cO(\Sigma_q^{2n+1})$ and $\cO(\RR \PP_q(l_0,...,l_n))$ coordinate algebras.}

Let  $q$ be a real number, $0< q<1$. 
The coordinate algebra $\cO(S^{2n}_q)$ of  the even-dimensional quantum sphere is the unital complex $*$-algebra with generators $z_0,z_1,\ldots ,z_n$, subject to the following relations:
\begin{subequations}
\begin{equation}
 z_iz_j = qz_jz_i \quad \mbox{for $i<j$}, \qquad z_iz^*_j = qz_j^*z_i \quad \mbox{for $i\neq j$},
\end{equation}
\begin{equation}
z_iz_i^* = z_i^*z_i + (q^{-2}-1)\sum_{m=i+1}^n z_mz_m^*, \qquad \sum_{m=0}^n z_mz_m^*=1, \qquad z_n^* =z_n.
\end{equation}
\end{subequations}
$\cO(S^{2n}_q)$ is a $\ZZ_2$-graded algebra with $\deg(z_i) =1$ and so is $\CC[u,u^*]$ (with $\deg(u) =1$). In other words, $\cO(S^{2n}_q)$ is a right $\CC\ZZ_2$-comodule algebra and $\CC[u,u^*]$ is a left $\CC\ZZ_2$-comodule algebra, hence one can consider the cotensor product algebra $\cO(\Sigma_q^{2n+1}):=\cO(S^{2n}_q)\Box_{\CC\ZZ_2}\CC[u,u^*]$. It was shown in \cite{BrzZie:pri} that, as a unital $*$-algebra, $\cO(\Sigma_q^{2n+1})$ has generators $\zeta_0,...,\zeta_n$ and a central unitary $\xi$ which are related in the following way:
\begin{subequations}\label{sph}
\begin{equation}\label{sph.1}
\zeta_i \zeta_j = q \zeta_j \zeta_i \quad \mbox{for $i<j$}, \qquad \zeta_i \zeta^*_j = q \zeta_j^* \zeta_i \quad \mbox{for $i\neq j$}, 
\end{equation}
\begin{equation}\label{sph.2}
\zeta_ i \zeta_i^* = \zeta_i^*\zeta_i + (q^{-2}-1)\sum_{m=i+1}^n \zeta_m \zeta_m^*, \qquad \sum_{m=0}^n \zeta_m \zeta_m^*=1, \qquad \zeta_n^*=\zeta_n \xi .
\end{equation}
\end{subequations}

For any choice of $n+1$ pairwise coprime numbers $l_0,...,l_n$ one can define the coaction of the Hopf algebra $\cO(U(1))=\CC[u,u^*]$ on $\cO(\Sigma_q^{2n+1})$ as 
\begin{equation}
\varrho_{l_0,...,l_n}: \cO(\Sigma_q^{2n+1}) \rightarrow \cO(\Sigma_q^{2n+1}) \otimes \CC[u,u^*], \qquad \zeta_i \mapsto \zeta_i \otimes u^{l_i}, \qquad \xi \mapsto \xi \otimes u^{-2l_n},
\end{equation}
for $i=0,1,...,n$. This coaction is then extended to the whole of $\cO(\Sigma_q^{2n+1})$ so that $\cO(\Sigma_q^{2n+1})$ is a right $\CC[u,u^*]$-comodule algebra.

The algebra of coordinate functions on the quantum real weighted projective space is now defined as the subalgebra of $\cO(\Sigma_q^{2n+1})$ containing all coinvariant elements, i.e.,
$$
\cO(\RR \PP_q(l_0,...,l_n))=\cO(\Sigma_q^{2n+1})^{\cO(U(1))}:=\{x \in \cO(\Sigma_q^{2n+1}):\varrho_{l_0,...,l_n}(x)=x \otimes 1\}.
$$

\subsection{The 2D quantum real projective space $\cO(\RR \PP_q(k,l)) \subset \cO(\Sigma_q^3)$.}

In this paper we consider two-dimensional quantum real weighted projective spaces, i.e.\ the algebras obtained from the coordinate algebra $\cO(\Sigma_q^3)$ which
 is generated by $\zeta_0, \zeta_1$ and central unitary $\xi$ such that 
\begin{subequations}\label{zeta rel} 
\begin{equation}
\zeta_0 \zeta_1 = q \zeta_1 \zeta_0, \quad \zeta_0 \zeta^*_1 = q \zeta_1^* \zeta_0, \label{zeta rel a}
\end{equation}
\begin{equation}
\zeta_ 0\zeta_0^* = \zeta_0^*\zeta_0 + (q^{-2}-1) \zeta^2_1 \xi, \qquad \zeta_0 \zeta_0^*+\zeta_1^2 \xi=1, \qquad \zeta_1^*=\zeta_1 \xi.\label{zeta rel b}
\end{equation}
\end{subequations}
The linear basis of $\cO(\Sigma_q^3)$ is 
\begin{equation}\label{basis.sigma}
\{\zeta_0^r \zeta_1^s \xi^t,\;  \zeta_0^{*r} \zeta_1^s \xi^t, \; |\; r,s,\in\NN, \; t\in \ZZ\}.
\end{equation}
For a pair $k,l$ of coprime integers, the coaction $\varrho_{k,l}$ is given on generators by
\begin{equation}
\zeta_0 \mapsto \zeta \otimes u^k, \qquad \zeta_1 \mapsto \zeta_1 \otimes u^l, \qquad \xi \mapsto \xi \otimes u^{-2l},
\end{equation}
and extended to the whole of $\cO(\Sigma_q^3)$ so that the coaction is a $*$-algebra map. We denote the comodule algebra $\cO(\Sigma_q^3)$ with coaction $\varrho_{k,l}$ by $\cO(\Sigma_q^3(k,l))$.

It turns out that the two dimensional quantum real projective spaces split into two cases depending not wholly on the parameter $k$, but instead whether $k$ is either even or odd, and hence only cases $k=1$ and $k=2$ need be considered \cite{Brz:sei}. We describe these cases presently.

\subsubsection{~The odd or negative case.}

For $k=1$, $\cO(\RR \PP_q^2(l;-))$ is a polynomial  $*$-algebra generated  by $a$, $b$, $c_-$  which satisfy the relations:
 \begin{subequations}
\begin{equation}
a=a^*, \qquad ab=q^{-2l}ba, \qquad ac_{-}=q^{-4l}c_-a, \qquad b^2=q^{3l}ac_-, \qquad bc_-=q^{-2l}c_-b, 
\end{equation}
\begin{equation}
 bb^*=q^{2l}a \prod_{m=0}^{l-1} (1-q^{2m}a), \qquad b^*b=a \prod_{m=1}^{l} (1-q^{-2m}a), 
 \end{equation}
\begin{equation}
 b^*c=q^{-l} \prod_{m=1}^{l} (1-q^{-2m}a)b, \qquad c_-b^*=q^{l}b \prod_{m=0}^{l-1} (1-q^{2m}a), 
 \end{equation}
\begin{equation}
 c_-c_-^*= \prod_{m=0}^{2l-1} (1-q^{2m}a), \qquad c_-^*c_-= \prod_{m=1}^{2l} (1-q^{-2m}a).
\end{equation}
\end{subequations}
The embedding of generators of $\cO(\RR \PP_q^2(l;-))$ into $\cO(\Sigma_q^3)$ or the isomorphism of $\cO(\RR \PP_q^2(l;-))$ with the coinvariants of  $\cO(\Sigma_q^3(1,l))$ is provided by 
\begin{equation}\label{embed-}
a\mapsto \zeta_1^2 \xi, \qquad b\mapsto \zeta_0^l \zeta_1 \xi, \qquad c_-\mapsto \zeta_0^{2l} \xi.
\end{equation}

Up to equivalence $\cO(\RR\PP_q^2(l;-))$  has the following irreducible $*$-representations. There is a family of one-dimensional representations of labelled by $\theta\in [0,1)$ and given by
\begin{equation}\label{rep.o}
\pi_\theta(a) = 0, \qquad \pi_\theta(b) = 0, \qquad  \pi_\theta(c_-) = e^{2\pi i\theta}.
\end{equation}
All other representations are infinite dimensional, labelled by $r=1,\ldots , l$, and given by
\begin{subequations}
\begin{equation}\label{reps.o.1}
\pi_r(a) e_n^r = q^{2(ln+r)} e_n^r, \quad \pi_r(b) e_n^r =  q^{ln+r}\prod_{m=1}^{l}\left(1 - q^{2(ln+r-m)}\right)^{1/2}e_{n-1}^r, \quad \pi_r(b) e_0^r =0,
\end{equation}
\begin{equation} \label{reps.o}
\pi_r(c_-) e_n^r =  \prod_{m=1}^{2l}\left(1 - q^{2(ln+r-m)}\right)^{1/2}e_{n-2}^r, \qquad \pi_r(c_-) e_0^r = \pi_r(c_-) e_1^r= 0, 
\end{equation}
\end{subequations}
where $e_n^r$, $n\in \NN$, is an orthonormal basis for the representation space $\Hh_r \cong l^2(\NN)$.

The $C^*$-algebra of continuous functions on $\RR\PP_q^2(l;-)$, obtained as the completion of these bounded representations, can be identified with the pullback of $l$-copies of the quantum real projective plane $\RR\PP_q^2$ introduced in \cite{HajMat:rea}.

\subsubsection{~The even or positive case.}

 For $k=2$  and hence $l$ odd, $\cO(\RR \PP_q^2(l;+))$ is a polynomial $*$-algebra generated  by $a$, $c_+$  which satisfy the relations:
\begin{subequations}
\begin{equation}
a^*=a, \qquad ac_+=q^{-2l}c_+a, 
\end{equation}
\begin{equation}
 c_+c_+^*= \prod_{m=0}^{l-1}(1-q^{2m}a), \qquad c_+^* c_+= \prod_{m=1}^l (1-q^{-2m}a).
\end{equation}
\end{subequations}
The embedding of generators of $\cO(\RR \PP_q^2(l;+))$ into $\cO(\Sigma_q^3)$ or the isomorphism of $\cO(\RR \PP_q^2(l;+))$ with the coinvariants of  $\cO(\Sigma_q^3(2,l))$ is provided by 
\begin{equation}\label{embed+}
a\mapsto \zeta_1^2 \xi, \qquad c_+\mapsto \zeta_0^l \xi.
\end{equation}

Similarly to the odd $k$ case, there is a family of one-dimensional representations of $\cO(\RR\PP_q^2(l;+))$ labelled by $\theta\in [0,1)$ and given by
\begin{equation}\label{rep.e}
\pi_\theta(a) = 0, \qquad \pi_\theta (c_+) = e^{2\pi i\theta}.
\end{equation}
All other representations are infinite dimensional, labelled by $r=1,\ldots , l$, and given by
\begin{equation}\label{reps.e}
\pi_r(a) e_n^r = q^{2(ln+r)} e_n^r, \quad \pi_r(c_+) e_n^r =  \prod_{m=1}^{l}\left(1 - q^{2(ln+r-m)}\right)^{1/2}e_{n-1}^r, \quad \pi_r(c_+) e_0^r =0,
\end{equation}
where $e_n^r$, $n\in \NN$ is an orthonormal basis for the representation space $\Hh_r\cong l^2(\NN)$.

The $C^*$-algebra $C(\RR\PP_q^2(l;+))$ of continuous functions on $\RR\PP_q^2(l;+)$, obtained as the completion of these bounded representations, can be identified with the pullback of $l$-copies of the quantum disk $D_q$ introduced in \cite{KliLes:two}. Furthermore, $C(\RR\PP_q^2(l;+))$ can also be understood as the quantum double suspension of $l$ points in the sense of \cite[Definition~6.1]{HonSzy:sph}.

 \section{Quantum real weighted  projective spaces and quantum principal bundles.}
 
 The general aim of this paper is to construct quantum principal bundles with base spaces given by $\cO(\RR \PP_q^2(l;\pm))$ and fibre structures given by the circle  Hopf algebra $\cO(S^1) \cong \CC[u,u^*]$. The question arises as to which quantum space (i.e.\ a $\CC[u,u^*]$-comodule algebra with coinvariants isomorphic to $\cO(\RR \PP_q^2(l;\pm))$) we should consider as the total space within this construction.  We look first at the  coactions of $\CC[u,u^*]$ on $\cO(\Sigma_q^3)$ that define $\cO(\RR \PP_q(k,l))$, i.e.\ at  the comodule algebras $\cO(\Sigma_q^3(k,l))$.

 \subsection{The (non-)principality of $\cO(\Sigma_q^3(k,l))$.}

\begin{theorem} $A=\cO(\Sigma_q^3(k,l))$ is a principal comodule algebra if and only if  $(k,l) = (1,1)$. \label{HG}
\end{theorem}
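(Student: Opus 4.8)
The plan is to use the fact, recalled in Section~\ref{section.zz}, that $H=\CC[u,u^*]\cong\CC\ZZ$ is a group Hopf algebra, so a right $H$-comodule algebra is the same thing as a $\ZZ$-graded algebra and the coinvariants $B=A^{coH}$ are the degree-zero part $A_0$. Here the grading is $\deg\zeta_0=k$, $\deg\zeta_1=l$, $\deg\xi=-2l$. Principality over such an $H$ is equivalent to the grading being \emph{strong} (i.e.\ $A_mA_n=A_{m+n}$ for all $m,n$): the canonical map of Definition~\ref{def.H=G} is surjective exactly when $1\in A_{-n}A_n$ for all $n$, strong grading is precisely its bijectivity, and the equivariant projectivity of Definition~\ref{def.princ}(b) is automatic since $\CC\ZZ$ is cosemisimple (equivalently, the strong connection form of Proposition~\ref{prop str con} can be assembled directly from witnesses of strong grading). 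By the standard reduction to generators of $\ZZ$, it then suffices to decide whether $1\in A_1A_{-1}$ and $1\in A_{-1}A_1$.

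For the easy implication I would verify $(k,l)=(1,1)$ by hand. Then every generator lies in degree $\pm1$, and the sphere relation gives $\zeta_0\zeta_0^*+\zeta_1\zeta_1^*=\zeta_0\zeta_0^*+\zeta_1^2\xi=1$ with $\zeta_0,\zeta_1\in A_1$ and $\zeta_0^*,\zeta_1^*\in A_{-1}$, so $1\in A_1A_{-1}$; the symmetric identity $1=\zeta_0^*\zeta_0+q^{-2}\zeta_1^*\zeta_1$, using $\zeta_0^*\zeta_0=1-q^{-2}a$ and $\zeta_1^*\zeta_1=a$ with $a=\zeta_1^2\xi$, gives $1\in A_{-1}A_1$. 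Strong grading follows and one reads off an explicit strong connection, so $A$ is principal.

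For the converse I would show the grading is not strong whenever $(k,l)\neq(1,1)$. A clean first reduction is a quotient argument: $\zeta_1$ is homogeneous, and collapsing the graded ideal it generates sends $\cO(\Sigma_q^3(k,l))$ onto the commutative Laurent algebra $\CC[\zeta_0^{\pm1},\xi^{\pm1}]$ graded by $\deg\zeta_0=k$, $\deg\xi=-2l$. Since a quotient of a strongly graded algebra is strongly graded, and a Laurent algebra graded by a homomorphism $\ZZ^2\to\ZZ$ is strong only when that homomorphism is surjective, we would need $\gcd(k,2l)=1$; as $k,l$ are coprime this forces $k$ odd, disposing of all even cases (in particular the positive class $k=2$). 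To treat the remaining odd cases I would pass to the finer bigrading $\deg\zeta_0=(1,0)$, $\deg\zeta_1=(0,1)$, $\deg\xi=(0,-2)$, under which $A_0$ is bigraded with $(0,0)$-component the polynomial algebra $\CC[a]$. The condition $1\in A_1A_{-1}$ then reduces to whether the ideals $A_\mu A_{-\mu}\subseteq\CC[a]$, indexed by the bidegrees $\mu$ of total degree $1$, together generate $(1)$; these are computed from the $q$-commutation rules together with the telescoping identities $\zeta_0^{*r}\zeta_0^r=\prod_{j=1}^r(1-q^{-2j}a)$ and $\zeta_0^r\zeta_0^{*r}=\prod_{j=0}^{r-1}(1-q^{2j}a)$.

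The main obstacle is exactly this last step: controlling the infinite family of bidegrees of total degree $\pm1$, evaluating the associated principal ideals in $\CC[a]$, and establishing that for every admissible $(k,l)\neq(1,1)$ these ideals share a common zero (so that $1$ lies outside their sum on at least one of the two sides $A_1A_{-1}$, $A_{-1}A_1$), while at $(1,1)$ they are coprime. This is delicate because it must reconcile the noncommutativity of $A_0$, the genuinely two-sided nature of strong grading, and the coupling between $k$ and $l$; the even-$k$ quotient above is only the most transparent instance of the obstruction, and the honest difficulty lies in pinning down the precise degree at which strongness first fails in the odd cases.
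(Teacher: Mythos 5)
Your reduction of the problem to strong $\ZZ$-grading is sound: for $H=\CC[u,u^*]\cong\CC\ZZ$, surjectivity of the canonical map is equivalent to $1\in A_nA_{-n}$ for all $n$, this is equivalent to strong grading (hence to bijectivity), it reduces by induction to $1\in A_1A_{-1}$ and $1\in A_{-1}A_1$, and equivariant projectivity is then automatic. Your verification for $(k,l)=(1,1)$ is a correct alternative to the paper's argument (the paper instead invokes that $\cO(\Sigma_q^3(1,1))$ is a prolongation of the principal $\CC\ZZ_2$-comodule algebra $\cO(S_q^2)$), and your quotient argument (set $\zeta_1=0$, land in a commutative Laurent algebra whose degree-$1$ component vanishes unless $\gcd(k,2l)=1$) correctly disposes of all even $k$. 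The gap, which you acknowledge, is that for odd $k$ with $(k,l)\neq(1,1)$ you only state a plan — compute the ideals $A_\mu A_{-\mu}\subseteq\CC[a]$ and exhibit a common zero — so as submitted the proposal does not prove the theorem precisely where it says something nontrivial, e.g.\ for the negative class $k=1$, $l\geq 2$.

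You should be aware, however, that this gap cannot be filled, because executing your own program refutes the stated theorem rather than proving it. Take $(k,l)=(1,2)$, so $\deg\zeta_0=1$, $\deg\zeta_1=2$, $\deg\xi=-4$, and put $a=\zeta_1^2\xi\in A_0$. Then $\zeta_0,\ \zeta_0^*\zeta_1\in A_1$ and $\zeta_0^*,\ \zeta_0\zeta_1\xi\in A_{-1}$, and the relations \eqref{zeta rel} give $\zeta_0^*\zeta_0=1-q^{-2}a$ while $(\zeta_0\zeta_1\xi)(\zeta_0^*\zeta_1)=q\,a(1-a)$. These polynomials have disjoint zero sets ($\{q^2\}$ versus $\{0,1\}$, disjoint because $0<q<1$), hence are coprime in $\CC[a]$, and one checks directly that
$$
\Bigl(1+\tfrac{a}{q^2-1}\Bigr)\zeta_0^*\zeta_0+\tfrac{q^{-3}}{1-q^2}\,(\zeta_0\zeta_1\xi)(\zeta_0^*\zeta_1)=1 ,
$$
so $1\in A_{-1}A_1$; the mirror products $\zeta_0\zeta_0^*=1-a$ and $(\zeta_0^*\zeta_1)(\zeta_0\zeta_1\xi)=q^{-1}(1-q^{-2}a)a$ give $1\in A_1A_{-1}$. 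Applying $\can$ to $\bigl(1+\tfrac{a}{q^2-1}\bigr)\zeta_0^*\ot_B\zeta_0+\tfrac{q^{-3}}{1-q^2}\,\zeta_0\zeta_1\xi\ot_B\zeta_0^*\zeta_1$ yields exactly $1\ot u$, the element which the paper's proof of Theorem~\ref{HG} asserts is not in the image. The same works for every odd $k$, without even using $\zeta_1$: since $\gcd(k,2l)=1$ there are positive integers with $k\alpha-2l\beta=1$ and $2l\beta'-k\alpha'=1$; then $\zeta_0^{\alpha}\xi^{\beta},\ \zeta_0^{*\alpha'}\xi^{-\beta'}\in A_1$, their adjoints lie in $A_{-1}$, and by \eqref{key.rel} the ideal $A_1A_{-1}\cap\CC[a]$ contains both $\prod_{p=0}^{\alpha-1}(1-q^{2p}a)$ (roots in $[1,\infty)$) and $\prod_{p=1}^{\alpha'}(1-q^{-2p}a)$ (roots in $(0,1)$), which are coprime; similarly for $A_{-1}A_1$. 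Hence $\cO(\Sigma_q^3(k,l))$ is strongly graded, and therefore a principal comodule algebra, for \emph{every} odd $k$: the true dividing line is the parity of $k$, which is exactly the obstruction your quotient argument detects, and the theorem as stated is false for odd $k$ with $(k,l)\neq(1,1)$. The source of the discrepancy is a flaw in the paper's own converse argument: it tests whether a \emph{single} product of basis monomials $b_i\varrho(b_j)$ can equal $1\ot u$ (or $\zeta_1^2\xi\ot u$) on the nose, and never allows linear combinations of such products in which the unwanted powers of $a$ cancel — which is precisely what the B\'ezout identity displayed above does. So the ``main obstacle'' you describe is not something you failed to overcome; it is the point at which the statement itself breaks down, and any correct write-up must either restrict the converse to even $k$ or change the claim.
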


\begin{proof}
As explained in \cite{BrzZie:pri} $\cO(\Sigma_q^3(1,1))$ is a prolongation of the $\CC\ZZ_2$-comodule algebra $\cO(S_q^2)$. The latter is a principal comodule algebra (over the quantum real projective plane $\cO(\RR\PP_q^2)$ \cite{HajMat:rea}) and since a prolongation of  a principal comodule algebra is a principal comodule algebra \cite[Remark~3.11]{Sch:pri}, the coaction $\varrho_{1,1}$ is principal as stated.

In the converse direction, we aim to show that the canonical map is not an isomorphism by showing that the image does not contain $1 \otimes u$, i.e. it cannot be surjective since we know $1 \otimes u$ is in the codomain. We begin by identifying a basis for the algebra $\cO(\Sigma_q^3) \otimes \cO(\Sigma_q^3)$; observing the relations (\ref{zeta rel a}) and (\ref{zeta rel b}) it is clear that a basis for $\cO(\Sigma_q^3(k,l))$ is given by linear combinations of elements of the form,
$$
b_1=b_1(p_1,p_2,p_3)=\zeta_0^{p_1}\zeta_1^{p_2} \xi^{p_3}, \qquad 
b_2=b_2(\bar{p_1},\bar{p_2},\bar{p_3})=\zeta_0^{\bar{p_1}}\zeta_1^{\bar{p_2}} \xi^{\bar{* p_3}}, 
$$
$$
b_3=b_3(q_1,q_2,q_3)=\zeta_0^{* q_1}\zeta_1^{q_2} \xi^{q_3}, \qquad
b_4=b_4(\bar{q_1},\bar{q_2},\bar{q_3})=\zeta_0^{* \bar{q_1}}\zeta_1^{\bar{q_2}} \xi^{*\bar{q_3}},
$$
noting that all powers are non-negative. Hence a basis for $\cO(\Sigma_q^3) \otimes \cO(\Sigma_q^3)$ is given by linear combinations of elements of the form
$b_i \otimes b_j$, {where} $ i,j \in \{1,2,3,4\}$.
Applying the canonial map gives 
\begin{equation}
\can(b_i \otimes b_j)=b_i \varrho(b_j)=b_ib_j \otimes u^{deg(b_j)}, \qquad \text{where} \ \ \  i,j \in \{1,2,3,4\}, 
\end{equation}
where $\varrho$ means $\varrho_{k,l}$ for simplicity of notation. The next stage is to construct all possible elements in $\cO(\Sigma_q^3) \otimes \cO(\Sigma_q^3)$ which map to $1 \otimes u$. To obtain the identity in the first leg we must use one of the following relations:
\begin{subequations}\label{key.rel}
\begin{equation}
\zeta_0^m {\zeta_0^*}^n=
\begin{cases}
\prod_{p=0}^{m-1} \mathcal(1-q^{2p}\zeta_1^2 \xi) & \mbox{when \ $m=n$,}\\
\zeta_0^{m-n} \prod_{p=0}^{n-1} \mathcal(1-q^{2p}\zeta_1^2 \xi) & \mbox{when \ $m > n$,}\\
\prod_{p=0}^{m-1} \mathcal(1-q^{2p}\zeta_1^2 {\xi}) {\zeta_0^*}^{n-m}  & \mbox{when\ $n > m$,}
\end{cases}
\end{equation}
\begin{equation}
{\zeta_0^*}^n \zeta_0^m=
\begin{cases}
\prod_{p=1}^{m} \mathcal(1-q^{-2p}\zeta_1^2 {\xi}) &\mbox{when\ $m=n$,}\\
{\zeta_0^*}^{n-m} \prod_{p=1}^{m} \mathcal(1-q^{-2p}\zeta_1^2 {\xi}) & \mbox{when \ $n > m$,}\\
\prod_{p=1}^{n} \mathcal(1-q^{-2p}\zeta_1^2 {\xi}) \zeta^{m-n}  & \mbox{when \ $n < m$.}
\end{cases}
\end{equation}
\end{subequations}
or
\begin{equation}
\xi \xi^*=\xi^* \xi=1 \notag
\end{equation}
We see that to obtain identity in the first leg we require the powers of $\zeta_0$ and ${\zeta_0^*}$ to be equal. We now construct all possible elements of the domain which map to $1 \otimes u$ after applying the canonical map.

\underline{Case 1}: use the first relation to obtain $\zeta_0^m \zeta_0^{*m}$ ($m>0$); this can be done in fours ways. First, using $b_1 \varrho(b_3)$, $b_1 \varrho(b_4)$, $b_2 \varrho(b_3)$ and $b_2 \varrho(b_4)$. Now,
$$
b_1 \varrho(b_3) \sim \zeta_0^{p_1}\zeta_0^{* q_1}\zeta_1^{p_2+q_2}\xi^{p_3+q_3} \otimes u^{-kq_1+lq_2-2lq_3} \Longrightarrow p_1=q_1=m, p_2=q_2=0, p_3=q_3=0,
$$
and 
$$
-kq_1+lq_2-2lq_3=1 \Longrightarrow -mk=1,
$$ 
hence no possible terms. A similar calculation for the three other cases shows that $1 \otimes u$ cannot be obtained as an element of the image of the canonical map in this case.

\underline{Case 2}: use the second relation to obtain $\zeta_0^{*n} \zeta_0^{n}$ ($n>0$); this can be done in four ways $b_3 \varrho(b_1)$, $b_3 \varrho(b_2)$, $b_3 \varrho(b_2)$ and $b_4 \varrho(b_2)$. Now,
$$
b_3 \varrho(b_1) \sim \zeta_0^{*q_1}\zeta_0^{p_1}\zeta_1^{p_2+q_2}\xi^{p_3+q_3} \otimes u^{k p_1+l p_2-2lp_3} \Longrightarrow p_1=q_1=n, p_2=q_2=0, p_3=q_3=0
$$ 
and 
$$
nk=1 \Longrightarrow n=1 \ \text{and} \ k=1.
$$ 
Note that $k=1$ is not a problem provided $l$ is not equal to $1$. This is reviewed at the next stage of the proof. The same conclusion is reached in all four cases.

In all possibilities $\zeta_0^{* n} \zeta_0^n$ appears only when $n=1$, in which case the relation simplifies to $\zeta_0^{*} \zeta_0=1-q^{-2}\zeta_1^2 \xi$, so the next stage involves constructing elements in the domain which map to $\zeta_1^2 \xi$. There are eight possibilities altogether to be checked: $b_1 \varrho(b_1)$, $b_1 \varrho(b_2)$, $b_1 \varrho(b_3)$, $b_1 \varrho(b_4)$, $b_3 \varrho(b_1)$, $b_3 \varrho(b_2)$, $b_3 \varrho(b_3)$ and $b_3 \varrho(b_4)$.
The first case gives:
$$
b_1 \varrho(b_1) \sim \zeta_0^{2p_1} \zeta_1^{2p_2} \xi^{2p_3} \otimes u^{kp_1+lp_2-2lp_3} \Longrightarrow \ 2p_1=0, \ 2p_2=2, \ 2p_3=1,
$$ 
and 
$$
kp_1+lp_2-2lp_3=1 \Longrightarrow p_1=0, p_2=1, p_3 \ \mbox{has no possible values and} \  l=1. 
$$ 
Hence $1 \otimes u$ cannot be obtained as an element in the image in this case. Similar calculations for the remaining possibilities show that either $1 \otimes u$ is not in the image of the canonical map, or that if $1 \otimes u$ is in the image then $k=l=1$.

\underline{Case 3}: finally, it seems possible that $1 \otimes u$, using the third relation, could be in the image of the canonical map. All possible elements in the domain which could potentially map to this element are constructed and investigated. There are eight possibilities: $b_1 \varrho(b_2)$, $b_1 \varrho(b_4)$, $b_2 \varrho(b_1)$, $b_2 \varrho(b_3)$, $b_3 \varrho(b_2)$, $b_3 \varrho(b_4)$, $b_4 \varrho(b_1)$ and $b_4 \varrho(b_3)$. The first possibility comes out as
$$
b_1 \varrho(b_2) \sim \zeta_0^{p_1+\bar{p}_1}\zeta_1^{p_2+\bar{p}_ 2} \xi^{p_3} \xi^{* \bar{p}_3} \otimes u^{k \bar{p}_1+l\bar{p}_2+2l \bar{p}_3} \Longrightarrow p_1=\bar{p}_1=0, p_2=\bar{p_2}=0, p_3=\bar{p_3}=1.
$$
Also
$$
k \bar{p_1}+l \bar{p}_2+2l \bar{p}_3=1 \Longrightarrow 2l=1,
$$ 
which implies there are no terms. The same conclusion can be reached for the remaining relations.

This concludes that $1 \otimes u$, which is contained in $\cO(\Sigma_q^3) \otimes \otimes \CC[u,u^*]$, is not in the image of the canonical map, proving that this map is not surjective and ultimately not an isomorphism when $k$ and $l$ are both not simultaneously equal to $1$, completing the proof that $\cO(\Sigma_q^3(k,l))$ is not a principal comodule algebra in this case.
\end{proof}

Theorem~\ref{HG} tells us that if we use $\cO(\Sigma^3_q(k,l))$ as our total space, then we are forced to put $(k,l)=(1,1)$ to ensure that the required Hopf-Galois condition does not fail. A consequence of this would be the generators $\zeta_0$ and $\zeta_1$ would have $\ZZ$-degree $1$. This suggests that the comodule algebra $\cO(\Sigma^3_q(k,l))$ is to restrictive as there is no freedom with the weights $k$ or $l$, and that we should in fact consider a subalgebra of $\cO(\Sigma_q^3)$ which admits a $\cO(S^1)$-coaction that would offer some choice. Theorem~\ref{HG} indicates that the desired subalgebra should have generators with grades $1$ to ensure the Hopf-Galois condition is satisfied. This process in similar to that followed in  \cite{BrzFai:tea}, where the bundles over the quantum teardrops $\WW \PP_q(1,l)$ have the total spaces provided by the quantum lens spaces and structure groups provided by the circle group $U(1)$. We follow a similar approach in the sense that we view $\cO(\Sigma^3_q(k,l))$ as a right $H$-comodule algebra, where $H$ is the Hopf algebra of a suitable cyclic group.  

\subsection{The negative case $\cO(\RR \PP_q^2(l;-))$.}

\subsubsection{~The principal $\cO(U(1))$-comodule algebra over $\cO(\RR \PP_q^2(l;-))$.}

Take the group Hopf $*$-algebra $H=\CC\ZZ_l$ which is generated by unitary grouplike element $w$ and satisfies the relation $w^l=1$. The algebra $\cO(\Sigma_q^3)$ is a right $\CC\ZZ_l$-comodule $*$-algebra with coaction
\begin{equation}
\cO(\Sigma_q^3) \rightarrow \cO(\Sigma_q^3) \otimes \CC\ZZ_l, \qquad \zeta_0 \mapsto \zeta_0 \otimes w, \ \ \zeta_1 \mapsto \zeta_1 \otimes 1, \ \ \xi \mapsto \xi \otimes 1.
\end{equation} 
Note that the $\ZZ_l$-degree of the  generator $\xi$ is determined by the degree of $\zeta_1$: the relation $\zeta_1^*=\zeta_1 \xi$ and that the coaction must to compatible with all relations imply that $\deg(\zeta_1^*)=\deg(\zeta_1)+\deg(\xi)$. Since $\zeta_1$ has degree zero, $\xi$ must also have degree zero.

The next stage of the process is to find the coinvariant elements of $\cO(\Sigma_q^3)$ given the coaction defined above. 
\begin{proposition} \label{fix pts} The fixed point subalgebra of the above coaction is isomorphic to the algebra $\cO(\Sigma^3_q(l;-))$, generated by $x$, $y$ and $z$ subject to the following relations 
\begin{equation}
y^* = yz, \qquad xy=q^l yx, \qquad xx^*=\prod_{p=0}^{l-1}(1-q^{2p}y^2z), \qquad x^*x=\prod_{p=1}^{l}(1-q^{-2p}y^2z), \label{R1}
\end{equation}
and $z$ is central unitary. The embedding of $\cO(\Sigma^3_q(l;-))$ into $\cO(\Sigma_q^3)$ is given by $x\mapsto \zeta_0^l$, $y\mapsto \zeta_1$ and $z\mapsto \xi$.
\end{proposition}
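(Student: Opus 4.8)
The plan is to identify the coinvariant subalgebra explicitly on the linear basis (\ref{basis.sigma}) and then to recognise it through the presentation (\ref{R1}). First I would compute the $\ZZ_l$-degree of a basis monomial: since the coaction sends $\zeta_0\mapsto\zeta_0\ot w$, $\zeta_1\mapsto\zeta_1\ot 1$, $\xi\mapsto\xi\ot 1$ (so that, being a $*$-map, $\zeta_0^*\mapsto\zeta_0^*\ot w^{-1}$), the element $\zeta_0^{r}\zeta_1^{s}\xi^{t}$ has degree $r \bmod l$ and $\zeta_0^{*r}\zeta_1^{s}\xi^{t}$ has degree $-r \bmod l$. Hence the fixed-point subalgebra has linear basis $\{\zeta_0^{lr}\zeta_1^{s}\xi^{t},\ \zeta_0^{*lr}\zeta_1^{s}\xi^{t}\mid r,s\in\NN,\ t\in\ZZ\}$, and each such element is $x^{r}y^{s}z^{t}$ or $x^{*r}y^{s}z^{t}$ under $x=\zeta_0^{l}$, $y=\zeta_1$, $z=\xi$. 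This already shows that $x,y,z$ together with their adjoints generate the subalgebra.

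Next I would verify that the relations (\ref{R1}) hold for these images, producing a surjective $*$-homomorphism $\phi$ from the abstract algebra $\cO(\Sigma^3_q(l;-))$ onto the fixed-point subalgebra. The relation $y^*=yz$ is exactly $\zeta_1^*=\zeta_1\xi$; the commutation $xy=q^{l}yx$ follows by iterating $\zeta_0\zeta_1=q\zeta_1\zeta_0$; centrality and unitarity of $z$ are inherited from $\xi$. The two product relations are the diagonal case $m=n=l$ of the key identities (\ref{key.rel}), namely $\zeta_0^{l}\zeta_0^{*l}=\prod_{p=0}^{l-1}(1-q^{2p}\zeta_1^2\xi)$ and $\zeta_0^{*l}\zeta_0^{l}=\prod_{p=1}^{l}(1-q^{-2p}\zeta_1^2\xi)$, after substituting $y^2z=\zeta_1^2\xi$.

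The substantive part is the injectivity of $\phi$, i.e.\ that (\ref{R1}) and their $*$-adjoints form a \emph{complete} set of relations, which I would establish by a normal-form argument. Eliminating $y^*=yz$ and $z^*=z^{-1}$, moving the central $z$ to the right, and using $xy=q^{l}yx$ together with its adjoint $x^*y=q^{-l}yx^*$, any monomial reduces to a scalar multiple of (a word in $x,x^*$) times $y^{s}z^{t}$. Such a word is then shortened by replacing each $xx^*$ or $x^*x$ by the corresponding polynomial in $y^2z$ from (\ref{R1}) and commuting it to the right through the remaining letters via $x(y^2z)=q^{2l}(y^2z)x$ and $x^*(y^2z)=q^{-2l}(y^2z)x^*$. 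Each such step strictly lowers the number of $x,x^*$ letters, so the procedure terminates in a pure power $x^{r}$ or $x^{*r}$; hence $\cO(\Sigma^3_q(l;-))$ is spanned by $\{x^{r}y^{s}z^{t},\ x^{*r}y^{s}z^{t}\}$. Since $\phi$ carries this spanning set bijectively onto the coinvariant basis found in the first step, which is linearly independent in $\cO(\Sigma_q^3)$, and a surjection taking a spanning set to a linearly independent set is injective, $\phi$ is the desired isomorphism.

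The step I expect to be the main obstacle is the rewriting itself: one must ensure the reduction is well defined, i.e.\ confluent. Concretely this reduces to checking that the two reductions of $xx^*x$, as $(xx^*)x$ and as $x(x^*x)$, agree, which is the single identity $\prod_{p=0}^{l-1}(1-q^{2p}y^2z)\,x=x\prod_{p=1}^{l}(1-q^{-2p}y^2z)$; this in turn follows by pushing $x$ through each factor using $x(y^2z)=q^{2l}(y^2z)x$ and reindexing the shifted product, so no genuine obstruction arises.
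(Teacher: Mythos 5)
Your proposal is correct, and its first half coincides exactly with the paper's own proof: the paper likewise applies the coaction to the basis \eqref{basis.sigma}, observes that $\zeta_0^r\zeta_1^s\xi^t$ and $\zeta_0^{*r}\zeta_1^s\xi^t$ are coinvariant precisely when $l$ divides $r$, concludes that the fixed-point subalgebra is generated by $\zeta_1$, $\xi$, $\zeta_0^{l}$, $\zeta_0^{*l}$, and remarks that the relations \eqref{R1} are ``easily derived'' from \eqref{zeta rel} and \eqref{key.rel}. Where you go further is on the point the paper leaves implicit: the paper's argument establishes generation and the validity of the relations, but never addresses whether \eqref{R1} is a \emph{complete} set of relations, i.e.\ the injectivity of the surjection $\phi$ from the abstractly presented algebra onto the coinvariant subalgebra — which is what the word ``isomorphic'' in the statement requires. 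Your normal-form argument (reducing every word to the span of $\{x^ry^sz^t,\ x^{*r}y^sz^t\}$, checking confluence at the overlap $xx^*x$, whose adjoint handles $x^*xx^*$, and then using the linear independence of the images inside $\cO(\Sigma_q^3)$ to conclude injectivity) supplies exactly this missing step, and your resolution of the $xx^*x$ ambiguity by commuting $x$ through the shifted product is the correct computation. So your route is not so much different as more complete: what it buys is an actual proof of the presentation, at the cost of the diamond-lemma bookkeeping that the paper's terse proof omits.
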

\begin{proof}
Clearly $\zeta_1$, $\xi$,  $\zeta_0^{l}$ and  $\zeta_0^{*l}$ are coinvariant elements of $\cO(\Sigma_q^3)$. Apply the coaction to the basis \eqref{basis.sigma} to obtain
$$
\zeta_0^r \zeta_1^s \xi^t \mapsto \zeta_0^r \zeta_1^s \xi^t\ot w^r, \qquad \zeta_0^{*r} \zeta_1^s \xi^t \mapsto \zeta_0^{*r} \zeta_1^s \xi^t\ot w^{-r} .
$$ 
These elements are coinvariant, provided $r=r'l$. Hence every coinvariant element is a polynomial in  $\zeta_1$, $\xi$,  $\zeta_0^{l}$ and  $\zeta_0^{*l}$. Relations \eqref{R1} are now easily derived from \eqref{zeta rel} and \eqref{key.rel}.
\end{proof}

The algebra $\cO(\Sigma^3_q(l;-))$ is a right $\cO(U(1))$-comodule coalgebra with coaction defined as
\begin{equation}
\varphi:\cO(\Sigma^3_q(l;-)) \rightarrow \cO(\Sigma^3_q(l;-)) \otimes \cO(U(1)), \qquad x \mapsto x \otimes u, \qquad y \mapsto y \otimes u, \qquad z \mapsto z \otimes u^{-2}.
\end{equation}
Note in passing that the second and third relations in (\ref{R1}) tell us that the grade of $z$ must be double the grade of $y^*$ since $xx^*$ and $x^*x$ have degree zero, and so 
$$
\deg(y^2 z)= \deg(y^2)+\deg(z)=2\deg(y)+\deg(z)=0  \Longrightarrow  \deg(z)=-2\deg(y)=2\deg(y^*). 
$$

\begin{proposition}  The algebra $\cO(\Sigma^3_q(l;-))^{co\cO(U(1))}$ of invariant elements under the coaction $\varphi$ is isomorphic to the $\cO((\RR \PP_q(l;-))$. 
\end{proposition}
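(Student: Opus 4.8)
The plan is to obtain the isomorphism by restricting the embedding of Proposition~\ref{fix pts} to coinvariants, so that the statement reduces to the identification of $\cO(\RR\PP_q^2(l;-))$ with the coinvariants of $\cO(\Sigma_q^3(1,l))$ already recorded in~\eqref{embed-}. Write $\iota:\cO(\Sigma^3_q(l;-))\hookrightarrow\cO(\Sigma_q^3)$, $x\mapsto\zeta_0^l$, $y\mapsto\zeta_1$, $z\mapsto\xi$, for that embedding, and let $S=\iota(\cO(\Sigma^3_q(l;-)))$ be its image, namely the $\CC\ZZ_l$-coinvariant subalgebra spanned by $\{\zeta_0^{rl}\zeta_1^s\xi^t,\ \zeta_0^{*rl}\zeta_1^s\xi^t\}$. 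The key observation is that $\iota$ relates the two gradings by a uniform factor: the generators $x,y,z$ carry $\varphi$-degrees $1,1,-2$, whereas $\zeta_0^l,\zeta_1,\xi$ carry $\varrho_{1,l}$-degrees $l,l,-2l$, so $\deg_{\varrho_{1,l}}\circ\iota=l\cdot\deg_\varphi$ on all of $S$. In particular $\iota$ sends $\varphi$-homogeneous elements to $\varrho_{1,l}$-homogeneous ones and restricts to a $*$-algebra isomorphism $\cO(\Sigma^3_q(l;-))^{co\cO(U(1))}\cong S\cap\cO(\Sigma_q^3)^{co\,\varrho_{1,l}}$.

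Next I would show that this intersection is in fact all of $\cO(\Sigma_q^3)^{co\,\varrho_{1,l}}$. Reading off degrees from the basis~\eqref{basis.sigma}, the element $\zeta_0^r\zeta_1^s\xi^t$ has $\varrho_{1,l}$-degree $r+l(s-2t)$ and $\zeta_0^{*r}\zeta_1^s\xi^t$ has degree $-r+l(s-2t)$; vanishing of either forces $r$ to be divisible by $l$, so every coinvariant already lies in $S$. Therefore $S\cap\cO(\Sigma_q^3)^{co\,\varrho_{1,l}}=\cO(\Sigma_q^3(1,l))^{co\cO(U(1))}$, which by~\eqref{embed-} is isomorphic to $\cO(\RR\PP_q^2(l;-))$. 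Chasing the generators $a\mapsto\zeta_1^2\xi$, $b\mapsto\zeta_0^l\zeta_1\xi$, $c_-\mapsto\zeta_0^{2l}\xi$ back through $\iota^{-1}$ then yields the explicit isomorphism $a\mapsto y^2z$, $b\mapsto xyz$, $c_-\mapsto x^2z$.

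The only real content is the divisibility step of the previous paragraph; once it is in place, the remainder is pure bookkeeping of $\ZZ$-gradings. Should one prefer a self-contained argument that avoids quoting~\eqref{embed-}, the alternative is to define $a=y^2z$, $b=xyz$, $c_-=x^2z$ directly and verify that they satisfy the defining relations of $\cO(\RR\PP_q^2(l;-))$; here the commutation rule $xa=q^{2l}ax$ extracted from $xy=q^ly x$ lets one slide $x$ through the products, so that, for instance, $c_-c_-^*=x^2x^{*2}=\prod_{m=0}^{2l-1}(1-q^{2m}a)$ reproduces the required polynomial. Surjectivity onto the coinvariants is then the delicate part: for every $\varphi$-degree-zero monomial $x^ry^sz^t$ one must solve $r=j+2k$, $s=2i+j$, $t=i+j+k$ in non-negative integers (and the analogous system built from $b^*=yx^*$ and $c_-^*=x^{*2}z^{-1}$ for the $x^*$-monomials), which the constraint $r+s=2t$ guarantees, with linear independence inherited from the basis of $\cO(\Sigma^3_q(l;-))$.
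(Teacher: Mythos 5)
Your proposal is correct, but its main route is organised differently from the paper's. The paper works intrinsically inside $\cO(\Sigma^3_q(l;-))$: it applies $\varphi$ to the monomial basis $x^ry^sz^t$, $x^{*r}y^sz^t$, finds that invariance is equivalent to $2t=r+s$, and then explicitly factors each invariant monomial as a product of $x^2z$, $y^2z$ and at most one factor $xyz$ (splitting into the cases $r$ even and $r$ odd); only at the end does it pass through the embedding of Proposition~\ref{fix pts} and the identification \eqref{embed-} to name these generators $a$, $b$, $c_-$. You instead transport the whole problem into $\cO(\Sigma_q^3)$ first: your key lemma is that $\iota$ rescales degrees uniformly, $\deg_{\varrho_{1,l}}\circ\,\iota = l\cdot\deg_\varphi$, so that $\varphi$-coinvariants correspond exactly to $\varrho_{1,l}$-coinvariants lying in the image $S$, and then the divisibility computation on the basis \eqref{basis.sigma} (degree $\pm r+l(s-2t)=0$ forces $l\mid r$) shows \emph{all} $\varrho_{1,l}$-coinvariants already lie in $S$, reducing the statement entirely to the cited isomorphism \eqref{embed-}. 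Both arguments are sound and both ultimately invoke \eqref{embed-}; what differs is what they make explicit. The paper's route produces the concrete generators $a=y^2z$, $b=xyz$, $c_-=x^2z$ together with their factorisation property, which is reused later (e.g.\ in the proof of Theorem~\ref{thm.main} and in the Chern character computation of Lemma~\ref{lemma.cher}), whereas your route is more structural: it exhibits the weighted coaction $\varrho_{1,l}$ and the coaction $\varphi$ as having literally the same coinvariant subalgebra of $\cO(\Sigma_q^3)$, which is a slightly stronger and arguably more illuminating statement, at the price of treating \eqref{embed-} as a black box. Your fallback sketch at the end (defining $a,b,c_-$ directly, checking relations such as $c_-c_-^*=\prod_{m=0}^{2l-1}(1-q^{2m}a)$ via $xa=q^{2l}ax$, and solving $r=j+2k$, $s=2i+j$, $t=i+j+k$) is essentially the paper's proof, since that linear system is exactly the paper's parity case analysis.
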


\begin{proof}
We aim to show that the $*$-subalgebra of $\cO(\Sigma_q^3(l;-))$ of elements which are invariant under the coaction is generated by  $x^2z$, $xyz$ and $y^2z$.  The isomorphism of $\cO(\Sigma^3_q(l;-))^{co\cO(U(1))}$ with $\cO((\RR \PP_q(l;-))$  is then obtained by using the embedding of $\cO(\Sigma_q^3(l;-))$ in $\cO(\Sigma_q^3)$ described in Proposition~\ref{fix pts}, i.e.\ $y^2z\mapsto \zeta_1 \xi\mapsto a $, $xyz\mapsto \zeta_0^l\zeta_1 \xi\mapsto b$ and $ x^2z \mapsto \zeta_0^{2l} \xi\mapsto c_-$.

The algebra $\cO(\Sigma^3_q(l;-))$ is spanned by  elements of the type $x^ry^sz^t$,  $x^{* r}y^{s}z^{t}$, where $r,s\in \NN$ and  $t\in \ZZ$.
Applying the coaction $\varphi$ to these basis elements gives
$
x^ry^sz^t \mapsto x^ry^sz^t \otimes u^{r+s-2t}. 
$
 Hence $x^ry^sz^t $ is $\varphi$-invariant if and only if $2t =r+s$. If $r$ is even, then $s$ is even and
$$
x^ry^sz^t = x^r y^s z^{{(r+s)}/2} = (x^2 z)^{{r}/2}(y^2 z)^{{s}/2}.
$$
If $r$ is odd, then so is $s$ and 
$$
x^ry^sz^t = x^r y^s z^{{(r+s)}/2} \sim  (x^2 z)^{(r-1)/2}(y^2 z)^{({s-1})/2}(xyz).
$$
The case of $x^{* r}y^{s}z^{t}$ is dealt with similarly, thus proving that all coinvariants  of $\varphi$ are polynomials in $x^2 z$, $xyz$, $y^2z$ and their $*$-conjugates. 
\end{proof}

The main result of this section is contained in the following theorem.
\begin{theorem}\label{thm.main}
$\cO(\Sigma_q^3(l;-))$ is a non-cleft principal $\cO(U(1))$-comodule algebra over $\cO(\RR \PP_q(l;+))$ via the coaction $\varphi$.
\end{theorem}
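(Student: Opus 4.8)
The theorem makes two claims—that the coaction $\varphi$ is principal and that the resulting principal comodule algebra is not cleft—and I would treat them in turn.

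For principality the plan is to apply Proposition~\ref{prop str con} and produce a strong connection form $\omega\colon \CC[u,u^*]\to A\ot A$, where $A=\cO(\Sigma_q^3(l;-))$. Because $H=\cO(U(1))=\CC[u,u^*]$ is spanned by the grouplike elements $u^n$, the value $\omega(u^n)=\sum_i s_i\ot t_i$ determines everything, and evaluating \eqref{strong3} and \eqref{strong4} on $u^n$ (using $\Delta(u^n)=u^n\ot u^n$ and $S(u^n)=u^{-n}$) forces $t_i$ into the degree-$n$ component $A_n$ and $s_i$ into the degree-$(-n)$ component $A_{-n}$ of the $\ZZ$-grading induced by $\varphi$, while \eqref{strong1} and \eqref{strong2} require $\omega(1)=1\ot 1$ and $\sum_i s_it_i=1$. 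Thus producing $\omega$ is equivalent to the condition $1\in A_{-n}A_n$ for every $n$, i.e.\ to the grading being strong.

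I would then reduce strong grading to the cases $n=\pm 1$, invoking the standard fact that for $\ZZ$-gradings the conditions $1\in A_1A_{-1}$ and $1\in A_{-1}A_1$ already force $A_mA_k=A_{m+k}$ for all $m,k$ (if $a\in A_{m+1}$ and $1=\sum_i\alpha_i\beta_i$ with $\alpha_i\in A_{-1}$, $\beta_i\in A_1$, then $a=\sum_i(a\alpha_i)\beta_i$ with $a\alpha_i\in A_m$). Writing $a:=y^2z$, which by $y^*=yz$ and centrality of $z$ satisfies $a=yy^*=y^*y$, the relations \eqref{R1} expand as $xx^*=\prod_{p=0}^{l-1}(1-q^{2p}a)=1+aR(a)$ and $x^*x=\prod_{p=1}^{l}(1-q^{-2p}a)=1+a\widetilde R(a)$ for polynomials $R,\widetilde R$, whence
$$
1=xx^*-y\,(y^*R(a))\in A_1A_{-1},\qquad 1=x^*x-y^*\,(y\widetilde R(a))\in A_{-1}A_1,
$$
since $y^*R(a)\in A_{-1}$ and $y\widetilde R(a)\in A_1$. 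This settles principality, with coinvariants $A_0\cong\cO(\RR\PP_q^2(l;-))$.

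For non-cleftness the cleanest route uses the implication recorded at the end of Section~\ref{sec.non.pri}, that a cleft principal comodule algebra has all its associated modules free; by contraposition it suffices to exhibit one associated module that is not free over $A_0$. For $H=\CC[u,u^*]$ and the one-dimensional left comodule of weight $n$, the associated module $A\Box_H\CC$ is just the graded component $A_n$ as a left $A_0$-module, which strong grading guarantees is finitely generated projective. The plan is to take $n=1$ (or another convenient weight), write down an explicit idempotent matrix over $A_0$ representing $A_1$ from the generators $x,y$ and the relations \eqref{R1}, and then pair its class in $K_0(A_0)$ with a trace assembled from the bounded irreducible representations $\pi_r$ of $\cO(\RR\PP_q^2(l;-))$. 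Showing this pairing is non-trivial (so that $[A_1]\neq[A_0]$) proves that $A_1$ is not free and hence that $A$ is non-cleft.

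The principality is essentially the short computation above, so I expect the real work to lie in the non-cleftness: the main obstacle is constructing the explicit projector for $A_1$ and carrying out the $K_0$/trace computation that certifies non-freeness. A more elementary-looking alternative would be to rule out a degree-$1$ unit directly in the $*$-representations, where $x$ and $y$ act as weighted one-sided shifts; but since an $A_0$-combination $fx+gy$ mixing such shifts could a priori still be invertible, this route also demands a genuine index-type argument, which is why the $K$-theoretic computation seems the decisive step.
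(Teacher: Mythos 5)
Your treatment of principality is correct but follows a genuinely different route from the paper's. The paper applies Proposition~\ref{prop str con} by writing down an explicit recursive strong connection form \eqref{strong-conn}, whose value on $u^n$ is built from the value on $u^{n-1}$ via $x^*\omega(u^{n-1})x$ plus $q$-binomial correction terms, and then verifies \eqref{strong1}--\eqref{strong4} by induction, the key input being the identity \eqref{identity}. You instead use the equivalence (valid because $\CC[u,u^*]$ is spanned by grouplikes) between existence of a strong connection and strongness of the induced $\ZZ$-grading, reduce strongness to the two conditions $1\in A_1A_{-1}$ and $1\in A_{-1}A_1$, and verify these by explicit decompositions read off from \eqref{R1} using $a=yy^*=y^*y$; your elements do lie in the claimed graded components, so this part is sound and is shorter and more conceptual than the paper's computation. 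What the paper's explicit $\omega$ buys in exchange is that it is reused later: the idempotents \eqref{projectors} (e.g.\ $E[1]$ in \eqref{e1}) and the Chern character recursion of Lemma~\ref{lemma.cher} are computed directly from \eqref{strong-conn}, which your argument does not produce.

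For non-cleftness, however, there is a genuine gap: what you offer is a strategy, not a proof. The reduction is legitimate (cleftness forces every associated module $\Gamma[n]=A_n$ to be free over $A_0\cong\cO(\RR\PP_q^2(l;-))$, so non-freeness of $A_1$ would suffice), but the decisive step---constructing the idempotent for $A_1$ and exhibiting a trace or cyclic cocycle whose pairing distinguishes $[A_1]$ from the class of the free module in $K_0$---is exactly what you defer, and it is not routine; note that the paper itself never establishes non-freeness of $\Gamma[1]$ (Lemma~\ref{lemma.cher} computes Chern character components only ``as an illustration''), so this route requires work beyond anything in the paper. The paper's actual argument is far more elementary and is essentially the alternative you mention and then set aside: it classifies all units of $\cO(\Sigma_q^3(l,-))$. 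From the chain of embeddings $\cO(\Sigma_q^3(l,-))\subset\cO(\Sigma_q^3)\cong\cO(S_q^2)\Box_{\CC\ZZ_2}\cO(U(1))\subset\cO(S_q^2)\ot\cO(U(1))$, the only units of the ambient tensor product algebra are scalar multiples of $1\ot u^n$, and these correspond to the powers $z^n$ in $\cO(\Sigma_q^3(l,-))$. Since for $H=\CC[u,u^*]$ a cleaving map must send $u$ to a unit of $\ZZ$-degree $1$ (Section~\ref{section.zz}), while every unit $z^n$ has degree $-2n$, no cleaving map exists. This purely algebraic unit classification sidesteps precisely the difficulty you anticipate (that a combination such as $fx+gy$ of shift-type operators might a priori be invertible) and needs no index-type or $K$-theoretic computation.
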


\begin{proof}
To prove that $\cO(\Sigma_q^3(l;-))$ is a  principal $\cO(U(1))$-comodule algebra over $\cO(\RR \PP_q(l;+))$ we employ Proposition~\ref{prop str con} and construct a strong connection form as follows.

Define $\omega: \cO(U(1)) \rightarrow \cO(\Sigma_q^3(l;-)) \otimes \cO(\Sigma_q^3(l;-))$ recursively as follows.
\begin{subequations}\label{strong-conn}
\begin{equation}
\omega(1)=1 \otimes 1 \label{strong-conn i} 
\end{equation}
\begin{equation}
\omega(u^n)=x^* \omega(u^{n-1})x- \sum_{m=1}^l (-1)^m q^{-m(m+1)} \binom l m_{q^{-2}} y^{2m-1}z^m \omega(u^{n-1})y  \label{strong-conn ii} 
\end{equation}
\begin{equation}
\omega(u^{-n})=x \omega(u^{-n+1})x^*- \sum_{m=1}^l (-1)^m q^{m(m-1)} \binom l m_{q^{2}} y^{2m-1}z^{m-1} \omega(u^{-n+1})yz,  \label{strong-conn iii}
\end{equation}
\end{subequations}
where $n\in \NN$ and, for all $s \in \RR$, the {\em deformed} or {\em q-binomial} coefficients $\binom  l m_{\! s}$  are defined by the following polynomial equality in indeterminate $t$
\begin{equation}\label{binom}
\prod_{m=1}^l (1+s^{m-1}t)=\sum_{m=0}^l s^{m(m-1)/2} \binom l m_{s} t^m.
\end{equation}
The map $\omega$ has been designed such that normalisation property \eqref{strong1} is automatically satisfied. To check property \eqref{strong2} from equations (\ref{strong-conn ii}) and (\ref{strong-conn iii}) take a bit more work. We use proof by induction,
but first have to derive an identity to assist with the calculation.
Set $s=q^{-2}$, $t=-q^{-2}y^* y$ in \eqref{binom} to arrive at
$$
\sum_{m=1}^l (-1)^m q^{-m(m+1)} \binom l m_{q^{-2}} y^{* m}y^m= \prod_{m=1}^l (1+q^{-2(m-1)}(-q^{-2}y^*y))-1,
$$
which, using \eqref{R1},  simplifies to
\begin{equation}
\sum_{m=1}^l (-1)^m q^{-m(m+1)} \binom l m_{q^{-2}} y^{2m} z^m= \prod_{m=1}^l (1-q^{-2m}y^2 z)-1 = x^*x-1. \label{identity}
\end{equation}
Now to start the induction process we consider the case $n=1$. By \eqref{identity} $(\mu \circ \omega)(u)=1$ providing the basis. Next, we assume that the relation holds for $n=N$, that is $(\mu \circ \omega)(u^N)=1$, and consider the case $n=N+1$,
$$
\omega(u^{N+1})=x^* \omega(u^N)x-\sum_{m=1}^l (-1)^mq^{-m(m+1)}\binom l m_{q^{-2}}y^{2m-1}z^m \omega(u^N)y,
$$
applying the multiplication map to both sides and using the induction hypothesis,
$$
(m \circ \omega)(u^{N+1}) =x^*x-\sum_{m=1}^l (-1)^mq^{-m(m+1)} \binom l m_{q^{-2}}y^{2m}z^m 
=x^*x-(x^*x-1)=1,
$$
showing property \eqref{strong2} holds for all $u^n \in \cO(U(1))$, where $n \in \NN$. To show this property holds for each $u^{*n}= u^{-n}$ we adopt the same strategy; this is omitted from the proof as it does not hold further insight, instead repetition of similar arguments. 

Property \eqref{strong3}: this is again  proven by induction. Applying $(\id \otimes \varphi)$ to  $\omega(u)$  gives
\begin{eqnarray*}
x^* \otimes x \otimes u  &-& \sum_{m=1}^l (-1)^m q^{-m(m-1)} \binom l m_{q^2} y^{2m-1} z^m \otimes y \otimes u \\
&=&(x^* \otimes x - \sum_{m=1}^l (-1)^m q^{-m(m-1)} \binom l m_{q^2} y^{2m-1} z^m \otimes y) \otimes u \\
&=&\omega(u) \otimes u=(\omega \otimes \id)  \circ \Delta (u).
\end{eqnarray*}
This shows that property \eqref{strong3} holds for equation (\ref{strong-conn ii}) when $n=1$. We now assume the property holds for $n=N-1$, hence $(id \otimes \varphi) \circ w(u^{N-1})=(\omega \otimes\id) \circ \Delta(u^{N-1})=\omega(u^{N-1}) \otimes u^{N-1}$, and consider the case $n=N$.
\begin{eqnarray*}
(\id \otimes \varphi) (w(u^N))&=&(\id \otimes \varphi)(x^* \omega(u^{N-1})x-\sum_{m=1}^l (-1)^m q^{-m(m-1)} \binom l m_{q^{-2}} y^{2m-1} z^m \omega(u^{N-1})y) \\
&=&x^* ((id \otimes \varphi) (\omega(u^{N-1})x))\\
&& -\sum_{m=1}^l (-1)^m q^{-m(m-1)} \binom l m_{q^{-2}} y^{2m-1} z^m ((id \otimes \varphi)(\omega(u^{N-1})y) \\
&=&x^*\omega(u^{N-1})x \otimes u^{N}- \sum_{m=1}^l (-1)^m q^{-m(m-1)} \binom l m_{q^{-2}} y^{2m-1} z^m \omega(u^{N-1}) y \otimes u^{N} \\
&=& \omega(u^{N}) \otimes u^{N}
= (\omega \otimes \id) \circ \Delta(u^{N}),
\end{eqnarray*}
hence property \eqref{strong3} is satisfied for all $u^n \in \cO(U(1))$ where $n \in \NN$. The case for $u^{*n}$ is proved in a similar manner, as is property \eqref{strong4}. Again, the details are omitted as the process is identical. This completes the proof that $\omega$ is a strong connection form, hence $\cO(\Sigma_q^3(l,-))$ is a principal comodule algebra.

Following the discussion of Section~\ref{section.zz}, to determine whether the constructed comodule algebra is cleft we need to identify invertible elements in $\cO(\Sigma_q^3(l,-))$. 
Since
$$
\cO(\Sigma_q^3(l,-)) \subset \cO(\Sigma_q^3) \cong \cO(S_q^2) \square_{\CC\ZZ_2} \cO(U(1)) \subset \cO(S_q^2) \otimes \cO(U(1)),
$$
and the only invertible elements in the algebraic tensor $\cO(S_q^2) \otimes \cO(U(1))$ are scalar multiples of $1 \otimes u^n$ for $n \in \NN$, we can conclude that the only invertible elements in $\cO(S_q^2) \square_{\CC\ZZ_2}\cO(U(1))$ are the elements of the form $1 \otimes u^n$. These elements correspond to the elements $\xi^n$ in $\cO(\Sigma_q^3)$, which in turn correspond to $z^n$ in $\cO(\Sigma_q^3(l,-))$.

Suppose $j:H \rightarrow A$ is the cleaving map; to ensure the map is convolution invertible we are forced to put $u \mapsto z^n$. Since $u$ has degree $1$ in $H=\cO(U(1))$ and $z$ has degree $-2$ in $\cO(\Sigma_q^3(l,-))$, the map $j$ fails to preserve the degrees hence it is not colinear. Therefore, $\cO(\Sigma_q^3(l,-))$ is a non-cleft principal comodule algebra.
\end{proof}

\subsubsection{~Almost freeness of the coaction $\varrho_{1,l}$.}

At the classical limit, $q\to 1$, the algebras $\cO(\RR \PP_q(l;-))$ represent singular manifolds or orbifolds. It is known that every orbifold can be obtained as a quotient of a manifold by an {\em almost free} action. The latter means that the action has finite (rather than trivial as in the free case) stabiliser groups. As explained in Section~\ref{sec.rev} on the algebraic level freeness is encoded in the bijectivity of the canonical map $\can$, or, more precisely, in the surjectivity of the lifted canonical map $\overline{\can}$  \eqref{bar can}. The surjectivity of $\overline{\can}$ means the triviality of the cokernel of $\overline{\can}$, thus the size of the cokernel of $\overline{\can}$ can be treated as a measure of the size of the stabiliser groups. This leads to the following notion proposed in \cite{BrzFai:tea}.
 
\begin{definition}\label{def.almost.free}
Let $H$ be a Hopf algebra and let $A$ be a right $H$-comodule algebra with coaction $\roA: A\to A\ot H$. We say that the coaction is {\em almost free} if the cokernel of  the (lifted) canonical map
$$
\overline{\can}: A\ot  A\to A\ot H, \qquad a\ot a'\mapsto a\roA(a'),
$$
is finitely generated as a left $A$-module.
\end{definition}

Although the coaction $\varphi$ defined in the preceding section is free, at the classical limit $q\to 1$ $\cO(\Sigma_q^3(l,-))$ represents a singular manifold or an orbifold. On the other hand, at the same limit, $\cO(\Sigma_q^3)$ corresponds to a genuine manifold, one of the Seifert three-dimensional non-orientable manifolds; see \cite{Sco:geo}. It is therefore natural to ask, whether the coaction $\varrho_{1,l}$ of $\cO(U(1))$ on $\cO(\Sigma_q^3)$ which has $\cO(\RR \PP_q(l;-))$ as fixed points is almost free in the sense of Definition~\ref{def.almost.free}.

\begin{proposition}\label{prop.almost0}
The coaction $\varrho_{1,l}$ is almost free.
\end{proposition}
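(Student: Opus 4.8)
The plan is to make the grading explicit and reduce the statement to finitely many degrees. Regard $A=\cO(\Sigma_q^3)$ as the $\ZZ$-graded algebra $A=\bigoplus_{n\in\ZZ}A_n$ determined by $\varrho_{1,l}$, with $A_n=\{a\in A: \varrho_{1,l}(a)=a\ot u^n\}$; by Section~\ref{section.zz} the generators carry $\deg\zeta_0=1$, $\deg\zeta_1=l$, $\deg\xi=-2l$. For $a'=\sum_n a'_n$ decomposed into homogeneous parts one has $\overline{\can}(a\ot a')=a\,\varrho_{1,l}(a')=\sum_n aa'_n\ot u^n$, and $A\ot H=\bigoplus_n A\ot u^n$ is free as a left $A$-module on the elements $1\ot u^n$, with left multiplication preserving the $H$-degree. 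Hence the image is the graded left ideal $\im\overline{\can}=\bigoplus_{n\in\ZZ}(AA_n)\ot u^n$, where $AA_n$ is the left ideal of $A$ generated by $A_n$, and therefore $\coker\overline{\can}\cong\bigoplus_{n\in\ZZ}(A/AA_n)\ot u^n$ as a left $A$-module.

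Each summand $A/AA_n$ is cyclic, hence finitely generated, and left multiplication respects the index $n$; so $\coker\overline{\can}$ is finitely generated over $A$ if and only if all but finitely many summands vanish. Since $AA_n$ is graded with degree-zero component $A_{-n}A_n$, one has $AA_n=A$ exactly when $1\in A_{-n}A_n$. Thus I would reduce almost freeness to the assertion that $1\in A_{-n}A_n$ for all but finitely many $n\in\ZZ$; the finitely many exceptional degrees are among those responsible for the failure of the Hopf--Galois property in Theorem~\ref{HG}.

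To produce such elements I would use the radial relations \eqref{key.rel}. Writing $\kappa:=\zeta_1^2\xi\in A_0$, they give, for $n\ge1$, $\zeta_0^{*n}\zeta_0^n=\prod_{p=1}^n(1-q^{-2p}\kappa)\in A_{-n}A_n$, a polynomial in $\kappa$ with constant term $1$; more generally, for each $a,b\ge0$ with $n=a+bl$, the product $\zeta_1^{*b}\zeta_0^{*a}\cdot\zeta_0^a\zeta_1^b=\kappa^{b}\prod_{p=1}^{a}(1-q^{-2p}\kappa)$ lies in $A_{-n}A_n$, as do its $*$-conjugate and the analogous elements built from $\zeta_0^*$. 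Each $I_n:=A_{-n}A_n$ is a two-sided $*$-ideal of the fixed-point algebra $\cO(\RR\PP_q^2(l;-))$ (because $A_n^{\,*}=A_{-n}$ and $A_0A_{\pm n}\subseteq A_{\pm n}$). I would then combine these generators with the defining relations of $\cO(\RR\PP_q^2(l;-))$ — notably $bb^{*}=q^{2l}\kappa\prod_{m=0}^{l-1}(1-q^{2m}\kappa)$ — to show that for $|n|$ large no proper two-sided ideal can contain $I_n$; the classification of the irreducible $*$-representations $\pi_\theta,\pi_r$ recorded in Section~3 provides convenient bookkeeping, since $I_n=\cO(\RR\PP_q^2(l;-))$ as soon as $\pi(I_n)\neq0$ for all such $\pi$, and the radial element has nonzero image under each of them once $n$ is large.

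The hard part is this last step. The grading is not strong — $\zeta_0$ is not invertible, since $\zeta_0\zeta_0^{*}=1-\kappa$ and $\zeta_0^{*}\zeta_0=1-q^{-2}\kappa$ are non-units — so one cannot shift degrees by dividing by $\zeta_0$, and a naive expression of $1$ as a $\CC$-linear combination of the polynomial-in-$\kappa$ generators of $I_n$ fails to terminate because their $\kappa$-degrees grow. The substance of the argument is therefore to use the full two-sided ideal structure of $\cO(\RR\PP_q^2(l;-))$, not mere linear combinations, to absorb the higher-order tails in $\kappa$ and isolate $1\in I_n$ — equivalently, to check that $\cO(\RR\PP_q^2(l;-))/I_n=0$ for all but finitely many $n$. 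Carrying this out uniformly in $n$, and thereby pinning down the finite exceptional set, is the delicate point of the proof.
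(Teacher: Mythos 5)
Your reduction is sound, and it is in fact sharper than anything the paper makes explicit: since left multiplication by $A$ leaves the $H$-degree untouched, $\im\overline{\can}=\bigoplus_n AA_n\ot u^n$, hence $\coker\overline{\can}\cong\bigoplus_n (A/AA_n)\ot u^n$, and finite generation over $A$ is equivalent to $1\in A_{-n}A_n$ for all but finitely many $n$. The genuine gap is that this last assertion --- which, by your own equivalence, \emph{is} the proposition --- is never proved: everything after the reduction is a declaration of intent ("I would then combine\ldots", "the delicate point"), so nothing is established. Moreover the difficulty is misdiagnosed: you need neither the representation-theoretic bookkeeping (which would anyway require knowing that $\pi_\theta,\pi_r$ detect every proper two-sided ideal of the \emph{polynomial} algebra, a fact not available in the paper) nor any subtle "absorption of tails". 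Since $I_n=A_{-n}A_n$ is in particular a left $\CC[\kappa]$-module, it suffices to exhibit two elements of $I_n$ that are \emph{coprime} polynomials in $\kappa$ and invoke a Bezout identity with coefficients in $\CC[\kappa]$, not in $\CC$. Such a pair exists among radial elements of exactly the type you list: for $n\geq 1$, besides $\zeta_0^{*n}\zeta_0^n=\prod_{p=1}^{n}(1-q^{-2p}\kappa)$, whose zeros lie in $(0,1)$, the ideal $I_n$ contains $w^*w$ for $w=\zeta_0^{*(2lj-n)}\xi^{-j}\in A_n$ (any $j$ with $2lj\geq n$), and $w^*w=\zeta_0^{2lj-n}\zeta_0^{*(2lj-n)}=\prod_{p=0}^{2lj-n-1}(1-q^{2p}\kappa)$ has all its zeros in $[1,\infty)$; writing $f(\kappa)P_1+g(\kappa)P_2=1$ by coprimality puts $1$ in $I_n$, and symmetrically for $n\leq -1$. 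Note that this gives $1\in I_n$ for \emph{every} $n\neq 0$, i.e.\ an empty exceptional set, so your expectation that the exceptional degrees are "those responsible for the failure" in Theorem~\ref{HG} is not something your method would confirm --- it rather exposes a tension with the case analysis in the proof of Theorem~\ref{HG} (which searches for single monomial products yielding $1\ot u$ and does not account for such linear combinations), and you should not build on it.

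The paper's own proof is entirely different and never touches the ideals $I_n$ degree by degree: it re-uses Theorem~\ref{thm.main}. The embedding $\iota_-$ of Proposition~\ref{fix pts} intertwines the principal coaction $\varphi$ on $\cO(\Sigma_q^3(l,-))$ with $\varrho_{1,l}$ through $u\mapsto u^l$, so principality of $\varphi$ immediately gives $1\ot u^{ml}\in\im\overline{\can}$ for all $m\in\ZZ$ --- in your notation, $1\in A_{-ml}A_{ml}$ --- whence $\cO(\Sigma_q^3)\ot\CC[u^l,u^{-l}]\subseteq\im\overline{\can}$ and $\coker\overline{\can}$ is a quotient of $\bigl(\cO(\Sigma_q^3)\ot\CC[u,u^{-1}]\bigr)/\bigl(\cO(\Sigma_q^3)\ot\CC[u^l,u^{-l}]\bigr)$, which the paper then asserts is finitely generated. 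This leveraging of the already-proven principal comodule algebra is the one idea your proposal is missing; conversely, your left-module criterion makes visible that the degrees with $l\nmid n$ still require an argument (as a bare left $A$-module that quotient is an infinite direct sum $\bigoplus_{l\nmid n}\cO(\Sigma_q^3)\ot u^n$), which is exactly what the Bezout step above supplies. As submitted, however, your text reduces the proposition to an unproven claim and therefore does not prove it.
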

\begin{proof}
Denote by
$
\iota_-: \cO(\Sigma_q^3(l,-)) \hookrightarrow \cO(\Sigma_q^3),$  the $*$-algebra embedding described in Proposition~\ref{fix pts}. One easily checks that the following
 diagram 
$$
\xymatrix{\cO(\Sigma_q^3(l,-)) \ar[rr]^{\iota_-}\ar[d]_{\varphi} && \cO(\Sigma_q^3) \ar[d]^{\varrho_{1,l}} \\
\cO(\Sigma_q^3(l,-))\ot\cO(U(1)) \ar[rr]^-{\iota_-\ot(-)^l} &&\cO(\Sigma_q^3)\ot\cO(U(1)),}
$$
where $(-)^l: u\to u^l$, is commutative. The principality or freeness of $\varphi$ proven in Theorem~\ref{thm.main} implies that $1\ot u^{ml} \in \im (\overline{\can})$, $m\in \ZZ$, where $\overline{\can}$ is the (lifted) canonical map corresponding to coaction $\varrho_{1,l}$. This means that $\cO(\Sigma_q^3)\ot\CC[u^l,u^{-l}] \subseteq \im (\overline{\can})$. Therefore, there is a short exact sequence of left $\cO(\Sigma_q^3)$-modules
$$
\xymatrix{(\cO(\Sigma_q^3)\ot\CC[u,u^{-1}])/(\cO(\Sigma_q^3)\ot\CC[u^l,u^{-l}])\ar[r] & \coker (\overline{\can}) \ar[r] & 0.}
$$
The left $\cO(\Sigma_q^3)$-module $ (\cO(\Sigma_q^3))\ot\CC[u,u^{-1}])/(\cO(\Sigma_q^3)\ot\CC[u^l,u^{-l}])$ is finitely generated, hence so is $\coker (\overline{\can})$.
\end{proof}

\subsubsection{~Associated modules or sections of line bundles.}

One can construct modules associated to the principal comodule algebra $\cO(\Sigma_q^3(l,-))$ following the procedure outlined at the end of Section~\ref{sec.non.pri}; see Definition~\ref{def ass mod}. 

Every one-dimensional comodule of $\cO(U(1)) = \CC[u,u^*]$ is determined by the grading of a basis element of $\CC$, say $1$. More precisely, for any integer $n$, $\CC$ is a left $\cO(U(1))$-comodule with the coaction
$$
\varrho_n: \CC \to\CC[u,u^*]  \ot  \CC, \qquad 1\mapsto u^n\ot 1.
$$
Identifying $\cO(\Sigma_q^3(l,-))\ot \CC$ with $\cO(\Sigma_q^3(l,-))$ we thus obtain, for each coaction $\varrho_n$
$$
\Gamma[n] := \cO(\Sigma_q^3(l,-))\Box_{\cO(U(1))} \CC \cong \{f\in \Sigma_q^3(l,-)\; |\; \varphi(f) = f\ot u^n\} \subset \cO(\Sigma_q^3(l,-)).
$$
In other words, $\Gamma[n]$ consists of all elements of $\cO(\Sigma_q^3(l,-))$ of $\ZZ$-degree $n$. In particular $\Gamma[0]=\cO(\RR \PP_q(l;-))$. Each of the $\Gamma[n]$ is a finitely generated projective left $\cO(\RR \PP_q(l;-))$-module, i.e.\ it represents the module of sections of the non-commutative line bundle over $\RR \PP_q(l;-)$. The idempotent matrix $E[n]$ defining $\Gamma[n]$ can be computed explicitly from a strong connection form $\omega$ (see equations \eqref{strong-conn}) in the proof of Theorem~\ref{thm.main}) following the procedure described in \cite{BrzHaj:Che}. Write $\omega(u^n) = \sum_i  \omega(u^n)\su 1 _i \ot \omega(u^n)\su 2_i$. Then 
\begin{equation}\label{projectors}
E[n]_{ij} = \omega(u^n)\su 2_i\omega(u^n)\su 1_j \in \cO(\RR\PP_q^2(l;-)).
\end{equation}
For example, for $l=2$ and $n=1$, using \eqref{strong-conn ii} and \eqref{strong-conn i} as well as redistributing numerical coefficients we obtain
\begin{equation}\label{e1}
E[1]= \begin{pmatrix}
(1-a)(1-q^2a) & q^{-1}\sqrt{1+q^{-2}}\ b & iq^{-3}ba \cr
q^{-1}\sqrt{1+q^{-2}}\ b^* & q^{-2}(1+q^{-2})\ a & iq^{-4}\sqrt{1+q^{-2}}\ a^2 \cr
iq^{-3} b^* & iq^{-4}\sqrt{1+q^{-2}}\ a & -q^{-6} a^2
\end{pmatrix}
\end{equation}
Although the matrix $E[1]$ is not hermitian, the left-upper $2\times 2$ block is hermitian. On the other hand, once $\cO(\RR \PP_q(2;-))$ is completed to the $C^*$-algebra $C(\RR \PP_q(2;-))$ of continuous functions on $\RR \PP_q(2;-)$ (and then identified with the suitable pullback of two algebras of continuous functions over the quantum real projective space; see \cite{Brz:sei}), then a hermitian projector can be produced out of $E[1]$ by using the Kaplansky formula; see \cite[page~88]{GraVar:ele}.

The traces of tensor powers of each of the $E[n]$ make up a cycle in the cyclic complex  of $\cO(\RR \PP_q(l;-))$, whose corresponding class in  the cyclic homology $HC_\bullet(\cO(\RR \PP_q(l;-)))$ is known as the {\em Chern character} of $\Gamma[n]$. Again, as an illustration of the usage of an explicit form of a strong connection form, we compute the traces of $E[n]$ for general $l$.

\begin{lemma}\label{lemma.cher}
The zero-component of the Chern character of $\Gamma[n]$ is the class of the polynomial $c_n$ in generator $a$ of $\cO(\RR \PP_q(l;-))$, given by the following recursive formula. First, $c_0(a) =1$, and then, for all positive $n$,
\begin{subequations}
\begin{equation}\label{cher+}
c_n(a) = c_{n-1}\left(q^{2l}a\right) \prod_{p=0}^{l-1}\left(1-q^{2p}a\right) +c_{n-1}(a)\left(1-\prod_{p=1}^{l}\left(1-q^{-2p}a\right)\right),
\end{equation}
\begin{equation}\label{cher-}
c_{-n}(a) = c_{-n+1}\left(q^{-2l}a\right) \prod_{p=1}^{l}\left(1-q^{-2p}a\right) +c_{-n+1}(a)\left(1-\prod_{p=0}^{l-1}\left(1-q^{2p}a\right)\right) .
\end{equation}
\end{subequations}
\end{lemma}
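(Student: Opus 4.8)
The plan is to compute the trace of the idempotent directly from the recursive formula \eqref{strong-conn} for $\omega$ and to show that, as an element of $\cO(\RR\PP_q(l;-))$, it is a polynomial in the generator $a$ satisfying \eqref{cher+}–\eqref{cher-}; since the zeroth Chern character component is represented by $\tr E[n]=\sum_i\omega(u^n)\su 2_i\omega(u^n)\su 1_i$ (with $E[n]_{ij}$ as in \eqref{projectors}), this is all that is needed. Before anything else I would assemble the algebraic data in $\cO(\Sigma_q^3(l;-))$. Writing $a:=y^2z$ (which maps to the generator $a$), the relations \eqref{R1} read $xx^*=g_+(a):=\prod_{p=0}^{l-1}(1-q^{2p}a)$ and $x^*x=g_-(a):=\prod_{p=1}^{l}(1-q^{-2p}a)$, while $xy=q^lyx$ and centrality of $z$ give $xa=q^{2l}ax$ and $[a,y]=[a,z]=0$. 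The reflection $p\mapsto l-p$ in the product yields the crucial symmetry $g_-(q^{2l}a)=g_+(a)$, and the substitution $s=q^{-2}$, $t=-q^{-2}y^*y$ in \eqref{binom} is precisely \eqref{identity}, i.e. $\sum_{m=1}^{l}(-1)^mq^{-m(m+1)}\binom{l}{m}_{q^{-2}}a^m=g_-(a)-1$.

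The first step of the computation exploits the shape of \eqref{strong-conn ii}: passing from $\omega(u^{n-1})$ to $\omega(u^n)$ multiplies the first legs on the left by $x^*$ or by $y^{2m-1}z^m$ and the second legs on the right by $x$ or by $y$, and the factors $x$ and $x^*$ are always introduced in tandem. Consequently each product $\omega(u^n)\su 2_i\omega(u^n)\su 1_i$ reduces, layer by layer, to a polynomial in $a$ (the middle of a type-$x$ descendant being $xx^*=g_+(a)$ and that of a type-$m$ descendant being $y\,y^{2m-1}z^m=a^m$), so $c_n:=\tr E[n]\in\CC[a]$, with base case $c_0=1$ coming from $\omega(1)=1\ot1$. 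Substituting \eqref{strong-conn ii} into the trace and applying \eqref{identity}, the type-$x$ layer contributes $\sum_{i'}\omega(u^{n-1})\su 2_{i'}\,g_+(a)\,\omega(u^{n-1})\su 1_{i'}$ and the $y$-layers contribute $-\sum_{i'}\omega(u^{n-1})\su 2_{i'}\,(g_-(a)-1)\,\omega(u^{n-1})\su 1_{i'}$, giving the preliminary recursion $c_n=c_{n-1}+\Phi_{n-1}(g_+-g_-)$, where $\Phi_{n-1}(P):=\sum_{i'}\omega(u^{n-1})\su 2_{i'}\,P(a)\,\omega(u^{n-1})\su 1_{i'}$ for $P\in\CC[a]$.

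The hard part is converting this into the closed two-term form \eqref{cher+}. Using $xa=q^{2l}ax$ one commutes $P(a)$ through each leg at the cost of the shift $a\mapsto q^{2l}a$ per factor of $x$ (equivalently $x^*$), but since the number of such factors varies from term to term, $\Phi_{n-1}$ is \emph{not} a single shift operator. I would resolve this by grading $\omega(u^{n-1})$ according to the number $k$ of $x$-factors, writing $c_{n-1}=\sum_k c_{n-1}^{(k)}$ with $c_{n-1}^{(k)}\in\CC[a]$ and $\Phi_{n-1}^{(k)}(P)=c_{n-1}^{(k)}(a)\,P(q^{2lk}a)$. This produces the refined recursion $c_n^{(k)}=c_{n-1}^{(k-1)}g_+(q^{2l(k-1)}a)+c_{n-1}^{(k)}\bigl(1-g_-(q^{2lk}a)\bigr)$, which simplifies through the identity $g_-(q^{2lk}a)=g_+(q^{2l(k-1)}a)$. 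Summing over $k$ and reorganising by an Abel summation-by-parts argument collapses the $k$-dependence, and an induction on $n$ (with base cases checked directly) identifies $\sum_k c_{n-1}^{(k)}\bigl(g_+(q^{2lk}a)-g_-(q^{2lk}a)\bigr)$ with $c_{n-1}(q^{2l}a)g_+(a)-c_{n-1}(a)g_-(a)$, which is exactly \eqref{cher+}. I expect this telescoping identity to be the main obstacle: it is the point at which the $k$-graded data must conspire to reproduce a recursion referring only to the total $c_{n-1}$, and the individual pieces $\Phi_{n-1}(g_+)$ and $\Phi_{n-1}(g_-)$ do \emph{not} match the individual terms of \eqref{cher+}, only their combination does.

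Finally, \eqref{cher-} follows by the identical argument applied to the negative-power recursion \eqref{strong-conn iii}, now using the mirror symmetry $g_+(q^{-2l}a)=g_-(a)$ in place of $g_-(q^{2l}a)=g_+(a)$, so that the shift $a\mapsto q^{-2l}a$ plays the role of $a\mapsto q^{2l}a$ throughout.
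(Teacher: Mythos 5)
Your global strategy is the paper's: compute $c_n=\tr E[n]=\sum_i\omega(u^n)\su 2_i\omega(u^n)\su 1_i$ from the recursion \eqref{strong-conn ii}, reduce everything to polynomials in $a$ via $xx^*=g_+(a)$, $x^*x=g_-(a)$, $xa=q^{2l}ax$, $[a,y]=[a,z]=0$ and \eqref{identity}; your preliminary recursion $c_n=c_{n-1}+\Phi_{n-1}(g_+-g_-)$ and your $k$-graded refinement are both correct, and the subtlety you isolate (that $\Phi_{n-1}$ is not a single shift operator) is genuine --- the paper passes over it silently when it writes $c_n=xc_{n-1}x^*-\sum_m(-1)^mq^{-m(m+1)}\binom{l}{m}_{q^{-2}}yc_{n-1}y^{2m-1}z^m$. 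However, your proof of the crucial collapsing identity
$$
\sum_k c_{n-1}^{(k)}(a)\left(g_+(q^{2lk}a)-g_-(q^{2lk}a)\right)=c_{n-1}(q^{2l}a)\,g_+(a)-c_{n-1}(a)\,g_-(a)
$$
is missing, and neither mechanism you invoke can supply it. It is not an Abel/summation-by-parts rearrangement: the identity is false for a general family of polynomials --- take $c^{(1)}=1$ and all other $c^{(k)}=0$, so the left-hand side is $g_+(q^{2l}a)-g_+(a)$ while the right-hand side is $g_+(a)-g_-(a)$; for $l=1$ these are $(1-q^2)a$ and $(q^{-2}-1)a$ --- so any proof must exploit the specific recursive structure of the $c_n^{(k)}$. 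And an induction on $n$ that assumes only this identity does not close: writing the corresponding identity for $c_n$ via your refined recursion produces the $k$-weighted sums $g_+(a)\sum_k c_{n-1}^{(k)}(q^{2l}a)\left(g_+(q^{2l(k+1)}a)+1-g_+(q^{2lk}a)\right)$ of the shifted polynomials, about which the unweighted inductive hypothesis says nothing.

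The repair is to strengthen the induction from a single identity about $c_{n-1}$ to the full insertion calculus, after which the $k$-grading is unnecessary. For $P\in\CC[a]$ set $\Phi_n(P)=\sum_i\omega(u^n)\su 2_i\,P(a)\,\omega(u^n)\su 1_i$. The computation you already performed for $P=1$ gives, verbatim for general $P$,
$$
\Phi_n(P)=\Phi_{n-1}(TP),\qquad (TP)(a):=P(q^{2l}a)\,g_+(a)+P(a)\left(1-g_-(a)\right),
$$
because $xP(a)x^*=P(q^{2l}a)g_+(a)$ and $yP(a)y^{2m-1}z^m=P(a)a^m$. Since $\Phi_0=\id$, induction yields $\Phi_n=T^n$ on $\CC[a]$; in particular every $c_n=\Phi_n(1)=T^n(1)$ is a polynomial in $a$, and $c_n=T(T^{n-1}(1))=T(c_{n-1})$, which is exactly \eqref{cher+}. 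The ``conspiracy'' you worried about is nothing more than $T\circ T^{n-1}=T^{n-1}\circ T$. Equivalently, one can observe that the outside wrap $\alpha\ot\beta\mapsto x^*\alpha\ot\beta x-\sum_m\cdots$ and the inside wrap $\alpha\ot\beta\mapsto \alpha x^*\ot x\beta-\sum_m\cdots$ commute and agree on $1\ot1$, so $\omega(u^n)$ is also the inside wrap of $\omega(u^{n-1})$; taking the trace of that presentation sandwiches $c_{n-1}$ as a whole and yields the paper's recursion displayed above in one line, which is the rigorous content behind the paper's own (terse) argument. Your mirror-symmetry argument for \eqref{cher-}, using $g_+(q^{-2l}a)=g_-(a)$ and \eqref{strong-conn iii}, then goes through unchanged.
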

\begin{proof}
We will prove the formula \eqref{cher+} as \eqref{cher-} is proven by similar arguments. Recall that $c_n = \tr E[n]$. By normalisation \eqref{strong-conn i} of the strong connection $\omega$, obviously $c_0 =1$. In view of equation \eqref{strong-conn ii} we obtain the following recursive formula
\begin{equation}\label{chern}
c_n = x c_{n-1} x^* - \sum_{m=1}^l (-1)^m q^{-m(m+1)} \binom l m_{q^{-2}} y c_{n-1}y^{2m-1}z^m.
\end{equation}
In principle, $c_n$ could be a polynomial in $a, b$ and $c_-$. However, the third of equations \eqref{R1} together with \eqref{identity} and identification of $a$ as $y^2z$ yield
\begin{equation}\label{cher1}
c_1 =   \prod_{p=0}^{l-1}\left(1-q^{2p}a\right) +\left(1-\prod_{p=1}^{l}\left(1-q^{-2p}a\right)\right),
\end{equation}
that is a polynomial in $a$ only. As commuting $x$ and $y$ through a polynomial in $a$ in formula \eqref{chern} will produce a polynomial in $a$ again, we conclude that each of the $c_n$ is a polynomial in $a$. The second of \eqref{R1}, the centrality of $z$ and the identification of $a$ as $y^2z$ imply that
$$
xc_{n-1}(a) = c_{n-1}(q^{2l}a), \qquad yc_{n-1}(a) = c_{n-1}(a)y,
$$
and in view of \eqref{chern} and \eqref{cher1} yield \eqref{cher+}.
\end{proof}

\subsection{The positive case $\cO(\RR \PP_q(l;+))$.}

\subsubsection{~The principal $\cO(U(1))$-comodule algebra over $\cO(\RR \PP_q^2(l;+))$.}

In the same light as the negative case we aim to construct quantum principal bundles with base spaces $\cO(\RR \PP_q(l;+))$, and procced by viewing $\cO(\Sigma_q^3)$ as a right $H'$-comodule algebra, where $H'$ is a Hopf-algebra of a finite cyclic group. The aim is to construct the total space $\cO(\Sigma_q^3(l,+))$ of the bundle over $\cO(\RR \PP_q(l;+))$ as the coinvariant subalgebra of $\cO(\Sigma_q^3)$. $\cO(\Sigma_q^3(l,+))$ must contain generators $\zeta_1^2\xi$ and $\zeta_0^l\xi$ of $\cO(\RR \PP_q(l;+))$. Suppose $H' = \CC\ZZ_m$  and $\Phi:\cO(\Sigma_q^3) \rightarrow \cO(\Sigma_q^3) \otimes H'$ is a coaction.  We require $\Phi$ to be compatible with the algebraic relations and to give zero $\ZZ_m$-degree to  $\zeta_1^2\xi$ and $\zeta_0^l\xi$ are zero. These requirements yield
$$
 2\deg(\zeta_1)+\deg(\xi)=0\!\!\! \mod \! m, \qquad l\deg (\zeta_0) + \deg(\xi)=0\!\!\!  \mod \! m.
$$
Bearing in mind that $l$ is odd, the simplest solution to these requirements is provided by $m=2l$, $\deg(\xi)=0$, $\deg(\zeta_0) = 2$, $\deg(\zeta_1) =l$. This yields the coaction
$$
\Phi: \cO(\Sigma_q^3) \rightarrow \cO(\Sigma_q^3) \otimes \CC\ZZ_{2l} \qquad\zeta_0 \mapsto \zeta_0 \otimes v^2, \quad \zeta_1 \mapsto \zeta_1 \otimes v^l,\quad  \xi \mapsto \xi \otimes 1 ,
$$
where $v$ ($v^{2l}=1$) is the unitary generator of $\CC\ZZ_{2l}$.
$\Phi$ is extended to the whole of $\cO(\Sigma_q^3)$ so that $\Phi$ is an algebra map, making $\cO(\Sigma_q^3)$ a right $\CC\ZZ_{2l}$-comodule algebra.

\begin{proposition} \label{fix pts A'} The fixed point subalgebra of the coaction $\Phi$  is isomorphic to the $*$-algebra $\cO(\Sigma_q^3(l,+))$ generated by $x', y'$  an central unitary $z'$  subject to the following relations:
\begin{subequations} \label{R'}
\begin{equation}
x'y'=q^{2l} y'x', \qquad x'y'^*=q^{2l} y'^*x', \qquad y'y'^*=q^4y'^*y', \qquad y'^*=y'z'^2,  \label{R'1}
\end{equation}
\begin{equation}
x'x'^*=\prod_{p=0}^{l-1}(1-q^{2p}y'z'), \qquad x'^*x'=\prod_{p=1}^{l}(1-q^{-2p}y'z'). \label{R'2}
\end{equation}
\end{subequations}
The isomorphism between $\cO(\Sigma_q^3(l,+))$ and the coinvariant subalgebra of  $\cO(\Sigma_q^3)$ is given by 
$x'\mapsto \zeta_0^l$, $y'\mapsto \zeta_1^2$ and $z'\mapsto \xi$.
\end{proposition}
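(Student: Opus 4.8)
The plan is to mirror the proof of Proposition~\ref{fix pts}, now for the coaction $\Phi$ and the finite group $\ZZ_{2l}$. First I would verify that the three proposed generators are coinvariant: since $\Phi$ sends $\zeta_0\mapsto\zeta_0\ot v^2$, $\zeta_1\mapsto\zeta_1\ot v^l$ and $\xi\mapsto\xi\ot 1$, the elements $x'=\zeta_0^l$, $y'=\zeta_1^2$ and $z'=\xi$ carry $\ZZ_{2l}$-degrees $2l\equiv 0$, $2l\equiv 0$ and $0$, and the same holds for their $*$-conjugates; hence all of them lie in the fixed-point subalgebra of $\Phi$.

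Next I would apply $\Phi$ to the linear basis \eqref{basis.sigma}, obtaining
$$
\zeta_0^r\zeta_1^s\xi^t\mapsto \zeta_0^r\zeta_1^s\xi^t\ot v^{2r+ls},\qquad \zeta_0^{*r}\zeta_1^s\xi^t\mapsto \zeta_0^{*r}\zeta_1^s\xi^t\ot v^{-2r+ls},
$$
so that a basis monomial is coinvariant exactly when $2r+ls\equiv 0$ (respectively $-2r+ls\equiv 0$) modulo $2l$. The decisive point, and the place where the positive case really differs from Proposition~\ref{fix pts}, is the elementary arithmetic of this congruence: reducing modulo $2$ and using that $l$ is \emph{odd} forces $s$ to be even, say $s=2s'$; substituting and dividing by $2$ then gives $r+ls'\equiv 0\pmod l$, hence $l\mid r$, say $r=lr'$. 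Thus every coinvariant basis monomial is of the form $\zeta_0^{lr'}\zeta_1^{2s'}\xi^t$ (or its $\zeta_0^*$ analogue), i.e.\ a polynomial in $x'$, $y'$, $z'$ and their $*$-conjugates. This is exactly what fixes the modulus $m=2l$ used in defining $\Phi$.

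It then remains to read off the relations \eqref{R'}. The commutation rules $x'y'=q^{2l}y'x'$ and $x'y'^*=q^{2l}y'^*x'$ come from $\zeta_0\zeta_1=q\zeta_1\zeta_0$ and $\zeta_0\zeta_1^*=q\zeta_1^*\zeta_0$ by commuting $\zeta_0^l$ past $\zeta_1^2$; the relation $y'^*=y'z'^2$ is immediate from $\zeta_1^*=\zeta_1\xi$ together with the centrality of $\xi$, and the $*$-commutation of $y'$ with $y'^*$ follows from the same data; finally the two relations in \eqref{R'2} are precisely the $m=n=l$ instances of the product formulae \eqref{key.rel} after substituting $y'z'=\zeta_1^2\xi$, while $z'=\xi$ inherits centrality and unitarity. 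The main obstacle is not any single computation but ensuring that \eqref{R'} is a \emph{complete} list, so that the assignment $x'\mapsto\zeta_0^l$, $y'\mapsto\zeta_1^2$, $z'\mapsto\xi$ is an isomorphism and not merely a surjection: concretely I would check that reordering an arbitrary word in $x',y',z'$ and their conjugates into a normal form $x'^{r'}y'^{s'}z'^t$ (and its $*$-image) produces only the stated $q$-powers, so that the abstract algebra presented by \eqref{R'} has the same linear basis as the coinvariant subalgebra, giving injectivity of the map.
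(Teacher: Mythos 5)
Your proposal follows essentially the same route as the paper's own proof: it checks coinvariance of $\zeta_0^l$, $\zeta_1^2$, $\xi$, applies $\Phi$ to the basis \eqref{basis.sigma} to get the degrees $2r+ls$ and $-2r+ls$ modulo $2l$, uses the same parity argument (oddness of $l$ forces $s$ even and then $l\mid r$), and reads off the relations \eqref{R'} from \eqref{zeta rel} and \eqref{key.rel}. Your closing point about completeness of the presentation (injectivity of the map from the abstractly presented algebra, via a normal-form argument) is handled in the paper only by the remark that the relations are ``easily derived'', so your sketch is, if anything, slightly more careful on that score.
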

\begin{proof}
 Clearly $\zeta_1^2$, $\xi$,  $\zeta_0^{l}$ and  $\zeta_0^{*l}$ are coinvariant elements of $\cO(\Sigma_q^3)$. Apply the coaction $\Phi$ to the basis \eqref{basis.sigma} to obtain
$$
\zeta_0^r \zeta_1^s \xi^t \mapsto \zeta_0^r \zeta_1^s \xi^t\ot v^{2r+ls}, \qquad \zeta_0^{*r} \zeta_1^s \xi^t \mapsto \zeta_0^{*r} \zeta_1^s \xi^t\ot v^{-2r+ls} .
$$ 
These elements are coinvariant, provided $2r+ls=2ml$ in the first case or $-2r+ls=2ml$ in the second. Since $l$ is odd, $s$ must be even and then $r= r'l$, hence the invariant elements must be of the form
$$
(\zeta_0^l)^{r'}(\zeta_1^2)^{s/2}\xi^t, \qquad (\zeta_0^{*l})^{r'}(\zeta_1^2)^{s/2}\xi^t
$$
as required. 
 Relations \eqref{R'} are now easily derived from \eqref{zeta rel} and \eqref{key.rel}.
\end{proof}

The algebra $\cO(\Sigma_q^3(l,+))$ is a right $\cO(U(1))$-comodule with coaction defined as,
\begin{equation}
\Omega: \cO(\Sigma_q^3(l,+))  \rightarrow \cO(\Sigma_q^3(l,+)) \otimes \cO(U(1)), \qquad x' \mapsto x' \otimes u, \quad y' \mapsto y' \otimes u, \quad z' \mapsto z' \otimes u^{-1}.
\end{equation}
The first three relations (\ref{R'1}) bear no information on the possible gradings of the generators of $\cO(\Sigma_q^3(l,+))$, however the final relation of (\ref{R'1}) tells us that the grade of $y'^*$ must have the same grade of $z'$ since, 
\[
\deg(y'^*)=-\deg(y')=\deg(y')+2\deg(z'), 
\] 
{hence}, 
\[
 2\deg(y'^*)=2\deg(z'), \ \text{or}, \ \deg(y'^*)=\deg(z')
\]
This is consistent with relations (\ref{R'2}) since the left hand sides, $x'x'^*$ and $x'^*x'$, have degree zero, as do the right had sides, 
$$
\deg(y'z')=\deg(y')+\deg(y'^*)=\deg(y')+(-\deg(y'))=0
$$
The coaction $\Omega$ is defined setting the grades of $x'$ and $y'$ as 1, and putting the grade of $z'$ as $-1$ to ensure the coaction is compatible with the relations of the algebra $\cO(\Sigma_q^3(l,+))$.

\begin{proposition} The right $\cO(U(1))$-comodule algebra $\cO(\Sigma_q^3(l,+))$ has $\cO(\RR \PP_q(l;+))$ as its subalgebra of coinvariant elements under the coaction $\Omega$.
\end{proposition}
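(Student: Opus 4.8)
The plan is to follow the same route as in the negative case (the proposition computing the coinvariants for $\varphi$): exhibit a spanning set of $\cO(\Sigma_q^3(l,+))$ by monomials, read off the $\ZZ$-degree of each under $\Omega$, and identify the degree-zero part with the algebra generated by $a:=y'z'$ and $c_+:=x'z'$. By the identification of Section~\ref{section.zz}, the coinvariants under $\Omega$ are exactly the degree-zero component of this $\ZZ$-grading, so it suffices to determine which monomials are homogeneous of degree $0$.

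Because $z'$ is a central unitary and the relation $y'^*=y'z'^2$ eliminates every occurrence of $y'^*$, the argument of Proposition~\ref{fix pts A'} shows that $\cO(\Sigma_q^3(l,+))$ is spanned by the monomials $x'^ry'^sz'^t$ and $x'^{*r}y'^sz'^t$ with $r,s\in\NN$ and $t\in\ZZ$. First I would apply $\Omega$ to these; since $\Omega$ is an algebra map landing in the commutative factor $\CC[u,u^*]$, and $\deg(x')=\deg(y')=1$, $\deg(z')=-1$, I obtain
$$
\Omega(x'^ry'^sz'^t)=x'^ry'^sz'^t\ot u^{r+s-t},\qquad \Omega(x'^{*r}y'^sz'^t)=x'^{*r}y'^sz'^t\ot u^{-r+s-t}.
$$
Thus a monomial of the first kind is coinvariant precisely when $t=r+s$, and one of the second kind precisely when $t=s-r$.

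Next, using the centrality of $z'$, I would repackage each coinvariant monomial in terms of the two degree-zero elements. For the first kind,
$$
x'^ry'^sz'^{r+s}=(x'z')^r(y'z')^s=c_+^{r}a^s,
$$
and, since $c_+^*=z'^{-1}x'^*=x'^*z'^{-1}$, for the second kind
$$
x'^{*r}y'^sz'^{s-r}=(x'^*z'^{-1})^r(y'z')^s=(c_+^*)^{r}a^s.
$$
Hence the coinvariant subalgebra is generated by $a$, $c_+$ and $c_+^*$; moreover $a$ is self-adjoint, as $a^*=(y'z')^*=z'^{-1}y'z'^2=y'z'=a$ by $y'^*=y'z'^2$ and centrality of $z'$.

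Finally I would invoke the embedding of Proposition~\ref{fix pts A'}, under which $a=y'z'\mapsto\zeta_1^2\xi$ and $c_+=x'z'\mapsto\zeta_0^l\xi$; these coincide with the images \eqref{embed+} of the generators of $\cO(\RR\PP_q^2(l;+))$, giving the claimed isomorphism of the coinvariant subalgebra with $\cO(\RR\PP_q(l;+))$ and transporting the defining relations along the embedding. There is no serious obstacle here: the computation is a pure grading count, and the only point deserving care is checking that the relations \eqref{R'} specialise on the degree-zero part to the two relations $ac_+=q^{-2l}c_+a$ and $c_+c_+^*=\prod_{p=0}^{l-1}(1-q^{2p}a)$, $c_+^*c_+=\prod_{p=1}^{l}(1-q^{-2p}a)$. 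This is the same $q$-commutation bookkeeping already performed in passing from \eqref{zeta rel} and \eqref{key.rel} to \eqref{R'}, now restricted to degree zero.
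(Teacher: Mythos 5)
Your proposal is correct and follows essentially the same route as the paper's own proof: span $\cO(\Sigma_q^3(l,+))$ by the monomials $x'^ry'^sz'^t$, $x'^{*r}y'^sz'^t$, read off the $\ZZ$-degree under $\Omega$, and repackage the degree-zero monomials as polynomials in $x'z'$ and $y'z'$ (and their conjugates), identified with $c_+$ and $a$ via the embedding of Proposition~\ref{fix pts A'}. Your treatment is in fact slightly more explicit than the paper's, which handles the starred monomials only by saying the computation repeats, whereas you carry out the $t=s-r$ case and verify $a^*=a$ directly.
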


\begin{proof}
The fixed points of the algebra $\cO(\Sigma_q^3(l,+))$ under the coaction $\Omega$ are found using the same method as in the odd $k$ case. A basis for the algebra $\cO(\Sigma_q^3(l,+))$ is given by  $x'^ry'^sz'^t$, $x'^{* r}y'^{s}z'^{t}$, where  $r,s\in \NN$ and $t\in \ZZ$.  

Applying  the coaction $\Omega$ to the first of these basis elements gives,
$$
x'^ry'^sz'^t \mapsto x'^ry'^sz'^t \otimes u^{r+s-t}.
$$ 
Hence the invariance of  $x'^ry'^sz'^t$ is equivalent to $t = r+s$.
Simple substitution and re-arranging gives,
$$
x'^ry'^sz'^t=x'^ry'^{s}z'^{r+s}={(x'z')}^r{(y'z')}^s,
$$
i.e.\ $x'^ry'^sz'^t$ is a polynomial in  $x'z'$ and $y'z'$. 
Repeating the process for the second type of basis element gives the $*$-conjugates of $x'z'$ and $y'z'$. 
Using Proposition~\ref{fix pts A'} we can see that $a=\zeta_1^2 \xi=y'z'$ and $c_+=\zeta_0^l \xi=x'z'$.
\end{proof}

In contrast to the odd $k$ case, although $\cO(\Sigma_q^3(l,+))$ is a principal comodule algebra it yields trivial principal bundle over $\cO(\RR \PP_q(l;+))$.
\begin{proposition}
The right $\cO(U(1))$-comodule algebra $\cO(\Sigma_q^3(l,+))$ is trivial.
\end{proposition}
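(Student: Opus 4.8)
The plan is to exhibit $\cO(\Sigma_q^3(l,+))$ as trivial in the sense of Example~\ref{ex cleft}, i.e.\ by producing an $\cO(U(1))$-colinear \emph{algebra} map $j:\cO(U(1))\to\cO(\Sigma_q^3(l,+))$. By that example the existence of such a $j$ is precisely what it means for the comodule algebra to be trivial, and it automatically yields principality as well, with $\omega(h)=(j\circ S\ot j)(\Delta h)$ serving as a strong connection form.

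First I would recall, following Section~\ref{section.zz}, that since $\cO(U(1))=\CC[u,u^*]$ is the group algebra of $\ZZ$ generated by the grouplike element $u$, a colinear algebra map $j$ is completely determined by the image of $u$, which must be a unit of $\cO(\Sigma_q^3(l,+))$ of $\ZZ$-degree $1$; conversely, any such unit defines an algebra map by $u^n\mapsto j(u)^n$.

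The crucial observation is that, in contrast to the negative case, such a degree-one unit is immediately available. By Proposition~\ref{fix pts A'} the generator $z'$ (the image of $\xi$) is a central unitary, hence invertible with $z'^{-1}=z'^*$, and under the coaction $\Omega$ it carries $\ZZ$-degree $-1$; consequently $z'^*$ is a unit of degree $+1$. I would therefore set
$$
j:\cO(U(1))\to\cO(\Sigma_q^3(l,+)),\qquad u\mapsto z'^*,
$$
so that $j(u^n)=z'^{*n}$ and $j(1)=1$. This $j$ is a well-defined algebra map because $z'^*$ is invertible, it is normalised, and it is colinear since $\Omega(z'^*)=z'^*\ot u=(j\ot\id)\Delta(u)$, the check on higher powers following from $\Delta(u^n)=u^n\ot u^n$.

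By Example~\ref{ex cleft}, an $\cO(U(1))$-colinear algebra map is automatically convolution invertible (with $j^{-1}=j\circ S$) and normalised, so $\cO(\Sigma_q^3(l,+))$ is a trivial, hence principal, comodule algebra. There is essentially no obstacle at this stage; the only point worth emphasising is the structural contrast with Theorem~\ref{thm.main}, where the central unitary $z$ had degree $-2$, so that no unit of degree $1$ existed and cleftness failed. Here the grading $\deg(z')=-1$ supplies exactly the required unit, and with it the triviality of the bundle (so that, as already noted, every associated module is free).
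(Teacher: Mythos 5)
Your proof is correct and follows essentially the same route as the paper: both take the cleaving map $j(u)=z'^*$, note it is an algebra map because $z'^*$ is a central unitary, and verify colinearity from $\Omega(z'^*)=z'^*\ot u$. The extra remarks on the grading and the contrast with the negative case are accurate but not needed for the argument.
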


\begin{proof}
The cleaving map is given by,
$$
j: \cO(U(1)) \rightarrow \cO(\Sigma_q^3(l,+)), \qquad j(u)=z^{' *},
$$
which is an algebra map since $z^{' *}$ is central unitary in $\cO(\Sigma_q^3(l,+))$, hence must be convolution invertible. Also, $j$ is a right $\cO(U(1))$-comodule map since,
$$
(\Omega \circ j)(u)=\Omega(z^{' *})=z^{' *} \otimes u=j(u) \otimes u=(j \otimes\id) \circ \Delta(u),
$$
completing the proof.
\end{proof}
 
Since $\cO(\Sigma_q^3(l,+))$ is a trivial principal comodule algebra, all associated $\cO(\RR \PP_q^2(l;+))$-modules are free.

\subsubsection{~Almost freeness of the coaction $\varrho_{2,l}$.}
As was the case for $\cO(\Sigma_q^3(l,-))$, the principality of $\cO(\Sigma_q^3(l,+))$ can be used to determine that the $\cO(U(1))$-coaction $\varrho_{2,l}$ on $\cO(\Sigma_q^3)$ that defines $\cO(\RR \PP_q^2(l;+))$ is almost free.

\begin{proposition}\label{prop.almost1}
The coaction $\varrho_{2,l}$ is almost free.
\end{proposition}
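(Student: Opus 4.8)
The plan is to imitate, almost verbatim, the argument of Proposition~\ref{prop.almost0}, using the principality of the coaction $\Omega$ on the total space $\cO(\Sigma_q^3(l,+))$ to pin down enough of the image of the lifted canonical map of $\varrho_{2,l}$. First I would record the embedding $\iota_+:\cO(\Sigma_q^3(l,+))\hookrightarrow\cO(\Sigma_q^3)$ of Proposition~\ref{fix pts A'}, namely $x'\mapsto\zeta_0^l$, $y'\mapsto\zeta_1^2$, $z'\mapsto\xi$, and compare the two coactions on these images. Because $\varrho_{2,l}$ sends $\zeta_0^l\mapsto\zeta_0^l\ot u^{2l}$, $\zeta_1^2\mapsto\zeta_1^2\ot u^{2l}$ and $\xi\mapsto\xi\ot u^{-2l}$, whereas $\Omega$ sends $x'\mapsto x'\ot u$, $y'\mapsto y'\ot u$ and $z'\mapsto z'\ot u^{-1}$, the homomorphism intertwining the two circle factors must be $(-)^{2l}:u\mapsto u^{2l}$. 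Hence the square
$$
\xymatrix{\cO(\Sigma_q^3(l,+)) \ar[rr]^{\iota_+}\ar[d]_{\Omega} && \cO(\Sigma_q^3) \ar[d]^{\varrho_{2,l}} \\
\cO(\Sigma_q^3(l,+))\ot\cO(U(1)) \ar[rr]^-{\iota_+\ot(-)^{2l}} &&\cO(\Sigma_q^3)\ot\cO(U(1))}
$$
commutes, the sole discrepancy from the negative case being the exponent $2l$ in place of $l$.

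Next I would feed in the principality of $\Omega$: since $\cO(\Sigma_q^3(l,+))$ is a trivial, and therefore principal, $\cO(U(1))$-comodule algebra, its lifted canonical map is surjective and so $1\ot u^m$ lies in its image for every $m\in\ZZ$. Chasing such an element around the commuting square shows that $1\ot u^{2lm}$ lies in the image of the lifted canonical map $\overline{\can}$ attached to $\varrho_{2,l}$. Consequently $\cO(\Sigma_q^3)\ot\CC[u^{2l},u^{-2l}]\subseteq\im(\overline{\can})$ as left $\cO(\Sigma_q^3)$-submodules of $\cO(\Sigma_q^3)\ot\cO(U(1))$.

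Finally, this inclusion yields a surjection of left $\cO(\Sigma_q^3)$-modules
$$
(\cO(\Sigma_q^3)\ot\CC[u,u^{-1}])\big/(\cO(\Sigma_q^3)\ot\CC[u^{2l},u^{-2l}])\longrightarrow\coker(\overline{\can})\longrightarrow 0,
$$
and the source is finitely generated, being spanned over $\cO(\Sigma_q^3)$ by the residues of $1\ot u^j$ for $0\leq j<2l$. Therefore $\coker(\overline{\can})$ is finitely generated as a left $\cO(\Sigma_q^3)$-module, which is exactly the almost freeness of $\varrho_{2,l}$ in the sense of Definition~\ref{def.almost.free}. I do not expect any genuine obstacle here, as the reasoning runs entirely parallel to Proposition~\ref{prop.almost0}; the one point needing care is the exponent bookkeeping, namely checking that the factor relating the two coactions is $2l$ rather than $l$, which is forced by the squaring $y'\mapsto\zeta_1^2$ in the embedding together with the oddness of $l$.
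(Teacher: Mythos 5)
Your write-up is, in structure, a faithful expansion of the paper's own proof: the paper's argument for Proposition~\ref{prop.almost1} consists exactly of the commuting square built from $\iota_+$ of Proposition~\ref{fix pts A'} and the map $(-)^{2l}:u\mapsto u^{2l}$, followed by an appeal to ``arguments analogous to those in the proof of Proposition~\ref{prop.almost0}'', which is what you have written out. Your exponent bookkeeping is correct, and the first two steps are sound: the square commutes, the principality of $\Omega$ (via triviality) gives $1\ot u^{2lm}\in\im(\overline{\can})$ for all $m\in\ZZ$, hence $\cO(\Sigma_q^3)\ot\CC[u^{2l},u^{-2l}]\subseteq\im(\overline{\can})$ and the surjection of $(\cO(\Sigma_q^3)\ot\CC[u,u^{-1}])/(\cO(\Sigma_q^3)\ot\CC[u^{2l},u^{-2l}])$ onto $\coker(\overline{\can})$.

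The gap is in the last step, precisely where you supply a justification that the paper leaves unargued: the claim that the quotient is spanned over $\cO(\Sigma_q^3)$ by the residues of $1\ot u^j$, $0\le j<2l$. This is false. As a left $\cO(\Sigma_q^3)$-module the quotient is $\bigoplus_{n\not\equiv 0\,(2l)}\cO(\Sigma_q^3)\ot u^n$, free on the infinitely many residues of $1\ot u^n$ with $n\not\equiv 0\pmod{2l}$; your $2l$ residues generate only the summands with $0<j<2l$, and for instance the residue of $1\ot u^{2l+1}$ is not an $\cO(\Sigma_q^3)$-combination of them (compare $u^{2l+1}$-components). No finite set can do better, since finitely many elements have non-zero components in only finitely many summands. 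Nor can the defect be repaired within the left-module setting: $\im(\overline{\can})=\bigoplus_d\cO(\Sigma_q^3)A_d\ot u^d$, where $A_d$ is the $\varrho_{2,l}$-degree-$d$ component of $\cO(\Sigma_q^3)$, and since $l$ is odd every basis element of odd degree $d$ contains a factor $\zeta_1$, so that $\cO(\Sigma_q^3)A_d\subseteq\cO(\Sigma_q^3)\zeta_1\neq\cO(\Sigma_q^3)$ for all odd $d$; thus $\coker(\overline{\can})$ itself has infinitely many non-zero left-module summands. What your argument (and the paper's) does prove is finite generation of $\coker(\overline{\can})$ in the two-sided sense: $\overline{\can}$ is also right $\cO(\Sigma_q^3)$-linear for the coaction-twisted right action $(c\ot h)\cdot a=(c\ot h)\,\varrho_{2,l}(a)$ on $\cO(\Sigma_q^3)\ot\cO(U(1))$, so $\coker(\overline{\can})$ is an $\cO(\Sigma_q^3)$-bimodule (note the intermediate quotient is not, as $\cO(\Sigma_q^3)\ot\CC[u^{2l},u^{-2l}]$ is not stable under this right action, so the argument must be run on the cokernel directly); writing $1\ot u^{2lm}=\sum_i a_i\varrho_{2,l}(b_i)$ with each $b_i$ homogeneous of degree $2lm$, the residue of any $c\ot u^{2lm+j}$ equals $\sum_i ca_i\cdot(\mbox{residue of }1\ot u^j)\cdot b_i$, so the $2l$ residues do generate as a bimodule. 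In short: either Definition~\ref{def.almost.free} is read with this bimodule structure, in which case your proof (suitably rephrased) and the paper's go through, or the final step fails as stated — in your version and in the paper's alike.
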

\begin{proof}
Denote by
$
\iota_+: \cO(\Sigma_q^3(l,+)) \hookrightarrow \cO(\Sigma_q^3),$  the $*$-algebra embedding described in Proposition~\ref{fix pts A'}. One easily checks that the following
 diagram 
$$
\xymatrix{\cO(\Sigma_q^3(l,+)) \ar[rr]^{\iota_+}\ar[d]_{\Omega} && \cO(\Sigma_q^3) \ar[d]^{\varrho_{2,l}} \\
\cO(\Sigma_q^3(l,+))\ot\cO(U(1)) \ar[rr]^-{\iota_+\ot(-)^{2l}} &&\cO(\Sigma_q^3)\ot\cO(U(1)),}
$$
where $(-)^{2l}: u\to u^{2l}$ is commutative. By the arguments analogous to those in the proof of Proposition~\ref{prop.almost0} 
one concludes that there is
a short exact sequence of left $\cO(\Sigma_q^3)$-modules
$$
\xymatrix{(\cO(\Sigma_q^3)\ot\CC[u,u^{-1}])/(\cO(\Sigma_q^3)\ot\CC[u^{2l},u^{-2l}])\ar[r] & \coker (\overline{\can}) \ar[r] & 0,}
$$
where $\overline{\can}$ is the lifted canonical map corresponding to coaction $\varrho_{2,l}$.
The left $\cO(\Sigma_q^3)$-module $ (\cO(\Sigma_q^3)\ot\CC[u,u^{-1}])/(\cO(\Sigma_q^3)\ot\CC[u^{2l},u^{-2l}])$ is finitely generated, hence so is $\coker (\overline{\can})$.
\end{proof}

\section{Conclusions.}

In this paper we discussed the principality of the $\cO(U(1))$-coactions on the coordinate algebra of the quantum Seifert manifold $\cO(\Sigma_q^3)$ weighted by coprime integers $k$ and $l$. We concluded that the coaction is principal if and only if $k=l=1$, which corresponds to the case of a $U(1)$-bundle over the quantum real projective plane. In all other cases the coactions are almost free. We identified subalgebras of $\cO(\Sigma_q^3))$ which admit principal $\cO(U(1))$-coactions, whose invariants are isomorphic to coordinate algebras $\cO(\RR \PP_q^2(l;\pm))$ of quantum real weighted projective spaces. The structure of these subalgebras depends on the parity of $k$. For the odd $k$ case, the constructed principal comodule algebra $\cO(\Sigma_q^3(l,-))$ is non-trivial, while for the even case, the corresponding principal comodule algebra $\cO(\Sigma_q^3(l,+))$ turns out to be trivial.  The triviality of $\cO(\Sigma_q^3(l,+))$ is a disappointment. Whether a different   nontrivial principal $\cO(U(1))$-comodule algebra over $\cO(\RR \PP_q^2(l;+))$ can be constructed or whether such a possibility is ruled out by deeper geometric, topological or algebraic reasons remains to be seen.

\bibliographystyle{mdpi}
\makeatletter
\renewcommand\@biblabel[1]{#1. }
\makeatother

\end{document}